\newcounter{case}
\renewcommand{\thecase}{\arabic{case}}
\newtheorem{theorem}{Theorem}[section]
\newtheorem{proposition}[theorem]{Proposition}
\newtheorem{lemma}[theorem]{Lemma}
\newtheorem{corollary}[theorem]{Corollary}
\newtheorem{example}[theorem]{Example}
\newcommand{\Aut}{\hbox{{\rm Aut}}}
\newcommand{\Fix}{\hbox{{\rm Fix}}}
\newcommand{\Dih}{\hbox{{\rm Dih}}}
\newcommand{\ZZ}{\mathbb{Z}}
\newcommand{\Cay}{\mathrm{Cay}}
\theoremstyle{definition}
\newtheorem{definition}[theorem]{Definition}
\begin{document}

\begin{center}
{\bf\large On automorphisms and structural properties\\ of generalized Cayley graphs} \\ [+4ex]

Ademir Hujdurovi\'c{\small$^{a,b}$},\
Klavdija Kutnar{\small$^{a,b,}$}\footnote{Corresponding author: Klavdija Kutnar (e-mail: klavdija.kutnar@upr.si).} \\ [+2ex]
Pawe{\l} Petecki{\small$^{a,b,c}$} \ and\
Anastasiya Tanana{\small$^{a,d}$}\\ [+2ex]
{\it \small $^a$University of Primorska, UP FAMNIT, Glagolja\v ska 8, 6000 Koper, Slovenia\\
$^b$University of Primorska, UP IAM, Muzejski trg 2, 6000 Koper, Slovenia\\
$^c$AGH University of Science and Technology, al. Mickiewicza 30, 30-059 Krakow, Poland\\
$^d$University of Bonn, Regina-Pacis-Weg 3, D-53113 Bonn, Germany
 }

\end{center}

\begin{abstract}
In this paper, generalized Cayley graphs are studied. It is proved that every generalized Cayley graph of order $2p$ is a Cayley graph, where $p$ is a prime. Special attention is given to generalized Cayley graphs on Abelian groups. It is proved that every generalized Cayley graph on an Abelian group with respect to an automorphism which acts as inversion is a Cayley graph if and only if the group is elementary Abelian $2$-group, or its Sylow $2$-subgroup is cyclic. Necessary and sufficient conditions for a generalized Cayley graph to be unworthy are given.
\end{abstract}

\begin{quotation}
\noindent {\em Keywords: generalized Cayley graph, vertex-transitive, Cayley graph}
\end{quotation}

\section{Introduction}\label{sec:intro}
\noindent

In this paper we consider generalized Cayley graphs, first introduced   in \cite{MSS92}.
\begin{definition}\label{def:gen_cayley}
Let $G$ be a group, $S$ a subset of $G$ and $\alpha$ an automorphism of $G$
such that the following conditions are satisfied:
\begin{enumerate}[(i)]
\item $\alpha^2=1$,
\item if $x\in G$ then $\alpha(x^{-1})x \not \in S$,
\item if $x,y\in G$ and $\alpha(x^{-1})y\in S$ then $\alpha(y^{-1})x\in S$.
\end{enumerate}
Then the {\em generalized Cayley graph} $X=GC(G,S,\alpha)$ on $G$ with respect to the ordered pair
$(S,\alpha)$ is the graph with vertex set $G$, with two vertices $x,y\in V(X)$ being adjacent in $X$
if and only if $\alpha(x^{-1})y\in S$. 
In other words, a vertex $x\in G$ is adjacent to all the vertices of the
form $\alpha(x)s$, where $s\in S$.
\end{definition}

Note that (ii) implies that $X$ has no loops, and (iii) implies that $X$ is undirected.
Also, in view of (i), the condition (iii) is equivalent to $\alpha(S^{-1})=S$.
Namely, by letting $x=1$ in (iii), we obtain $\alpha(S^{-1})=S$, and conversely,
if $\alpha(S^{-1})=S$, then $\alpha(x^{-1})y\in S$ implies that $\alpha(y^{-1}\alpha(x))=\alpha(y^{-1})x\in S$.
If $\alpha=1$ then we say that $GC(G,S,\alpha)$ is a {\em Cayley graph} and write simply $\Cay(G,S)$.
Therefore every Cayley graph is also a generalized Cayley graph, but the converse is not true (see
\cite[Proposition~3.2]{MSS92}).
A generalized Cayley graph $GC(G,S,\alpha)$ is connected if and only if $S$ is a left generating set for the quasigroup $(G,*)$, where $f*g=\alpha(f)g$ for all $f,g\in G$ (see \cite[Proposition~3.5]{MSS92}). 

In \cite{MSS92} the properties of generalized Cayley graphs
relative  to canonical  double  covers (also called bipartite double covers)  of  graphs are considered. For graphs $X$ and $Y$ the {\it direct product $X\times Y$} of $X$ and $Y$ is the graph with vertex set $V(X\times Y)=V(X)\times V(Y)$, and two vertices $(x_1,y_1)$ and $(x_2,y_2)$ are adjacent in $X\times Y$ if and only if $x_1$ is adjacent with $x_2$ in $X$ and $y_1$ is adjacent with $y_2$ in $Y$.
{\it Canonical double cover} $B(X)$ of a graph $X$ is the direct product $X \times K_2$ ($K_2$ is the complete graph on two vertices). 
It is easily seen that $\Aut(B(X))$ contains a subgroup isomorphic to $\Aut(X)\times \ZZ_2$. If $\Aut(B(X))$ is isomorphic to $\Aut(X)\times \ZZ_2$ then the graph $X$ is called {\it stable}, otherwise it is called {\it unstable}. This concept was first defined by Maru\v si\v c et al. \cite{MSS89} and studied later most notably by Surowski \cite{S01,S03}, Wilson \cite{W08}, Lauri et al. \cite{LMS15}. 

If $X$ is a connected non-bipartite graph, then  $B(X)$ is a Cayley graph if and only if $X$ is a generalized Cayley graph (\cite[Theorem~3.1]{MSS92}). It is also proved that every generalized Cayley graph which is stable is a Cayley graph (see \cite[Proposition~3.3]{MSS92}). Therefore, every generalized Cayley graph which is not Cayley graph is unstable. Recently it was proved that there are infinitely many vertex-transitive generalized Cayley graphs which are not Cayley graphs
 (see \cite{HKM15}).

In this paper we continue studying the properties of generalized Cayley graphs started in \cite{HKM15}. Since for defining generalized Cayley graphs one needs a group automorphism of order two, in Section~\ref{sec:aut} we study some properties of such automorphisms. 
In Section~\ref{sec:inversion} we study generalized Cayley graphs on Abelian groups arising from the inversion automorphism $\iota:x\mapsto x^{-1}$. We prove that every generalized Cayley graph on an Abelian group $G$ with respect to $\iota$ is a Cayley graph if and only if $G$ is either elementary Abelian 2-group, or Sylow 2-subgroup of $G$ is cyclic (see Theorem~\ref{thm:maininversion}).
In  Section~\ref{sec:2p} it is proved that every generalized Cayley graph of order  twice a prime 
is a Cayley graph.  Necessary and sufficient conditions for a generalized Cayley graph to be unworthy are given in Section 5 (a graph is called {\it unworthy} if it has two vertices with the same neighbours).


\section{Group automorphisms of order two}\label{sec:aut}
\noindent

For defining a generalized Cayley graph on a group $G$, one needs an automorphism of $G$ of order two. Therefore it is important to understand the structure of  such group automorphisms. We start this section with the following proposition, which tells us that for studying generalized Cayley graph on a group $G$, it is sufficient to consider only the representatives of conjugacy classes in $\Aut(G)$. 

\begin{proposition}\label{prop:conjugation}
$GC(G,S,\alpha)\cong GC(G,\varphi(S),\varphi \alpha \varphi^{-1})$ for any $\varphi \in \Aut(G)$.
\end{proposition}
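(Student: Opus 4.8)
The plan is to verify that the automorphism $\varphi$ itself, viewed as a permutation of the common vertex set $G$, is the required graph isomorphism. Before doing so, I would first check that the pair $(\varphi(S),\varphi\alpha\varphi^{-1})$ satisfies conditions (i)--(iii) of Definition~\ref{def:gen_cayley}, so that $GC(G,\varphi(S),\varphi\alpha\varphi^{-1})$ is actually defined. Writing $\beta=\varphi\alpha\varphi^{-1}$ and $T=\varphi(S)$, condition (i) is immediate since $\beta^2=\varphi\alpha^2\varphi^{-1}=1$. For (iii) I would use the reformulation recorded just after the definition, that (iii) is equivalent to $\alpha(S^{-1})=S$: since $\varphi$ is an automorphism it commutes with taking inverses of subsets, so $\beta(T^{-1})=\varphi\alpha\varphi^{-1}\big(\varphi(S^{-1})\big)=\varphi\big(\alpha(S^{-1})\big)=\varphi(S)=T$. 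For (ii), given $x\in G$ write $x=\varphi(z)$; then $\beta(x^{-1})x=\varphi\big(\alpha(z^{-1})z\big)$, and since $\alpha(z^{-1})z\notin S$ and $\varphi$ is injective, this element lies outside $T$.

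The heart of the argument is then the chain of equalities $\beta\big(\varphi(x)^{-1}\big)\varphi(y)=\varphi\alpha\varphi^{-1}\big(\varphi(x^{-1})\big)\varphi(y)=\varphi\big(\alpha(x^{-1})\big)\varphi(y)=\varphi\big(\alpha(x^{-1})y\big)$, valid for all $x,y\in G$ because $\varphi$ is a homomorphism. Since $\varphi$ is injective, $\varphi\big(\alpha(x^{-1})y\big)\in\varphi(S)=T$ if and only if $\alpha(x^{-1})y\in S$. Hence $x$ and $y$ are adjacent in $GC(G,S,\alpha)$ exactly when $\varphi(x)$ and $\varphi(y)$ are adjacent in $GC(G,\varphi(S),\beta)$, so the bijection $\varphi\colon G\to G$ preserves both adjacency and non-adjacency and is therefore a graph isomorphism.

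I do not expect a genuine obstacle here; the statement is essentially a bookkeeping exercise. The only point demanding a little care is to keep the two "multiplications" apart — adjacency is governed by the twisted product $f*g=\alpha(f)g$ (respectively $\beta(f)g$) rather than by ordinary group multiplication — and to push $\varphi$ through consistently, using the relation $\varphi\alpha=\beta\varphi$ at the key step. Once the well-definedness of the right-hand side has been confirmed, the rest is a direct substitution.
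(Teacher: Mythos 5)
Your proposal is correct and follows essentially the same route as the paper: first check that $(\varphi(S),\varphi\alpha\varphi^{-1})$ satisfies Definition~\ref{def:gen_cayley}, then verify that $\varphi$ itself carries adjacency in $GC(G,S,\alpha)$ to adjacency in $GC(G,\varphi(S),\varphi\alpha\varphi^{-1})$ via the identity $\varphi\alpha=(\varphi\alpha\varphi^{-1})\varphi$. The only cosmetic differences are that you argue condition (ii) directly and phrase the adjacency check as a single ``if and only if'' chain, whereas the paper uses a short contradiction for (ii) and treats the two directions of adjacency separately.
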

\begin{proof}
Let us first prove that $\varphi(S)$ and $\varphi \alpha \varphi^{-1}$ satisfy the conditions from Definition~\ref{def:gen_cayley}.
\begin{enumerate}[(i)]
\item $(\varphi \alpha \varphi^{-1})^2=\varphi \alpha \varphi^{-1}\varphi \alpha \varphi^{-1}=id$. 
\item Suppose that there exists $g\in G$ such that $(\varphi \alpha \varphi^{-1})(g^{-1})g\in \varphi(S)$. 
Then $\varphi^{-1}((\varphi \alpha \varphi^{-1})(g^{-1})g)\in S$, and consequently $\alpha((\varphi^{-1}(g))^{-1})\varphi^{-1}(g)\in S$, a contradiction.
\item $(\varphi \alpha \varphi^{-1})(\varphi(S))=\varphi (\alpha(S))=\varphi(S^{-1})=\varphi(S)^{-1}$.
\end{enumerate}

Mapping $\varphi$ is clearly a bijective mapping from $G$ to $G$. Let $\{x,y\}$ be an arbitrary edge in $GC(G,S,\alpha)$. Then $y=\alpha(x)s$ for some $s\in S$. Further we have
$$\varphi(y)=\varphi(\alpha(x)s)=\varphi(\alpha(x))\varphi(s)=(\varphi\alpha)(x)\varphi(s)=(\varphi\alpha \varphi^{-1})(\varphi(x))\varphi(s).$$ 
This implies that $\varphi(x)$ and $\varphi(y)$ are adjacent in $GC(G,\varphi(S),\varphi \alpha \varphi^{-1})$. Similarly, one can see that if $\varphi(x)$ and $\varphi(y)$ are adjacent in $GC(G,\varphi(S),\varphi \alpha \varphi^{-1})$, then $x$ and $y$ are adjacent in $GC(G,S,\alpha)$.
Therefore $\varphi$ is in fact an isomorphism between $GC(G,S,\alpha)$ and $GC(G,\varphi(S),\varphi \alpha \varphi^{-1})$.
\end{proof}

For a group $G$ and $\alpha \in \Aut(G)$, the set $\Fix(\alpha)$ is defined as $\Fix(\alpha)=\{g\in G \mid \alpha(g)=g\}$.
We let $\omega_\alpha\colon G\to G$ be the mapping defined by
$\omega_{\alpha}(x)=\alpha(x)x^{-1}$ and let $\omega_{\alpha}(G)=\{\omega_{\alpha}(g)\mid g \in G\}$. Notice that Definition~\ref{def:gen_cayley}(ii) is equivalent to $\omega_{\alpha}(G)\cap S=\emptyset$. In the following proposition  some properties of these sets are given. The proof is straightforward and is omitted.

\begin{proposition}\label{prop:claims}
Let $G$ be a group and $\alpha \in \Aut(G)$ such that $\alpha^2=1$. Then the following hold:
\begin{enumerate}[(a)]
\item $\Fix(\alpha)$ is a subgroup of $G$;
\item  If $G$ is an Abelian group then $\omega_{\alpha}(G)$ is a subgroup of $G$;
\item $\alpha(x)=x^{-1}$, for every $x\in \omega_{\alpha}(G)$.
\end{enumerate}
\end{proposition}

Observe that the set $\omega_{\alpha}(G)$ does not need to be a subgroup of $G$ in general. For example, if $G=A_4$ and $\alpha(g)=(1\,2)g(1\,2)$, then $\omega_{\alpha}(G)=\{id,(1\,2\,3),(1\,3\,2),(1\,2\,4),(1\,4\,2),(1\,2)(3\,4)\}$, which is not a subgroup of $A_4$.
The following result is proved in \cite{HKM15} and will be used later.

\begin{lemma}\label{lem:fix_alpha}
{\rm \cite{HKM15}} Let $G$ be a group and $\alpha \in \Aut(G)$. Then 
$
\displaystyle |\Fix(\alpha)| |\omega_{\alpha}(G)|=|G|.
$
\end{lemma}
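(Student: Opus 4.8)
The plan is to show that the fibres of the map $\omega_{\alpha}\colon x\mapsto \alpha(x)x^{-1}$ are exactly the left cosets of $\Fix(\alpha)$ in $G$. Once this is established, the claimed identity is immediate: $\Fix(\alpha)$ is a subgroup by Proposition~\ref{prop:claims}(a), so Lagrange's theorem gives $|G| = |\Fix(\alpha)| \cdot [G:\Fix(\alpha)]$, and the fibre description identifies $[G:\Fix(\alpha)]$ with $|\omega_{\alpha}(G)|$.

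First I would determine when two elements have the same image under $\omega_{\alpha}$. For $x,y\in G$, the equality $\omega_{\alpha}(x)=\omega_{\alpha}(y)$ means $\alpha(x)x^{-1}=\alpha(y)y^{-1}$, which rearranges to $\alpha(y)^{-1}\alpha(x)=y^{-1}x$, that is, $\alpha(y^{-1}x)=y^{-1}x$, i.e. $y^{-1}x\in\Fix(\alpha)$. Hence $\omega_{\alpha}(x)=\omega_{\alpha}(y)$ if and only if $x$ and $y$ lie in the same left coset of $\Fix(\alpha)$. Thus $\omega_{\alpha}$ is constant on each left coset of $\Fix(\alpha)$ and takes distinct values on distinct left cosets, so it induces a bijection from the set $G/\Fix(\alpha)$ of left cosets onto $\omega_{\alpha}(G)$. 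Therefore $|\omega_{\alpha}(G)| = [G:\Fix(\alpha)]$, and multiplying through by $|\Fix(\alpha)|$ yields $|\Fix(\alpha)|\,|\omega_{\alpha}(G)| = |G|$.

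I do not expect any real obstacle here; the one point deserving care is that $\omega_{\alpha}$ is not a group homomorphism in general — indeed, $\omega_{\alpha}(G)$ need not even be a subgroup, as the $A_4$ example preceding the lemma shows — so the counting cannot be packaged as a first isomorphism theorem and must instead be carried out via the explicit fibre description above. It is also worth remarking that the argument uses neither $\alpha^2=1$ nor commutativity of $G$, so the statement holds for an arbitrary automorphism $\alpha$ of a finite group $G$, consistent with the way the lemma is quoted.
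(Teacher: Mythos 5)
Your argument is correct: the computation showing $\omega_{\alpha}(x)=\omega_{\alpha}(y)$ if and only if $y^{-1}x\in\Fix(\alpha)$ identifies the fibres of $\omega_{\alpha}$ with the left cosets of $\Fix(\alpha)$, and Lagrange's theorem then gives $|\Fix(\alpha)|\,|\omega_{\alpha}(G)|=|G|$; your remarks that neither $\alpha^2=1$ nor commutativity is needed, and that the count cannot go through a homomorphism argument, are also accurate. The paper itself only cites \cite{HKM15} for this lemma and gives no proof, so there is nothing to compare against; your fibre-counting proof is the standard one and fills that gap correctly.
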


The following result is proved in \cite{M09}, however because of completeness and some differences in terminologies used we include the proof here.
\begin{proposition}\label{prop:odd} {\rm \cite{M09}}
If $G$ is an Abelian group of odd order then $G=\Fix(\alpha)\times \omega_{\alpha}(G)$.
\end{proposition}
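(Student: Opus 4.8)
The plan is to show that $G$ decomposes as an internal direct product $\Fix(\alpha) \times \omega_\alpha(G)$, using that both factors are subgroups (Proposition~\ref{prop:claims}(a),(b)), that their orders multiply to $|G|$ (Lemma~\ref{lem:fix_alpha}), and that $G$ is Abelian so the product of the two subgroups is again a subgroup. Thus it suffices to prove that $\Fix(\alpha) \cap \omega_\alpha(G) = \{1\}$; the order count from Lemma~\ref{lem:fix_alpha} then forces $\Fix(\alpha)\,\omega_\alpha(G) = G$, and since $G$ is Abelian this intersection-plus-order argument gives the internal direct product decomposition.

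So the heart of the matter is the triviality of the intersection, and this is where oddness of $|G|$ enters. Suppose $x \in \Fix(\alpha) \cap \omega_\alpha(G)$. From $x \in \Fix(\alpha)$ we have $\alpha(x) = x$. From $x \in \omega_\alpha(G)$ and Proposition~\ref{prop:claims}(c) we have $\alpha(x) = x^{-1}$. Combining, $x = x^{-1}$, i.e.\ $x^2 = 1$. Since $|G|$ is odd, $G$ has no element of order $2$, so $x = 1$. This is the only place the hypothesis is used, and it is short; there is no real obstacle here.

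Finally I would assemble the pieces: $\Fix(\alpha)$ and $\omega_\alpha(G)$ are subgroups of the Abelian group $G$ with trivial intersection, so $\Fix(\alpha)\,\omega_\alpha(G)$ is a subgroup of order $|\Fix(\alpha)|\,|\omega_\alpha(G)| = |G|$ by Lemma~\ref{lem:fix_alpha}, hence equals $G$; together with the trivial intersection and normality (automatic in an Abelian group) this yields $G = \Fix(\alpha) \times \omega_\alpha(G)$. I expect the write-up to be essentially a few lines, the only subtlety being to cite Proposition~\ref{prop:claims}(b) for $\omega_\alpha(G)$ being a subgroup (valid since $G$ is Abelian) and Proposition~\ref{prop:claims}(c) for the key relation $\alpha(x) = x^{-1}$ on $\omega_\alpha(G)$.
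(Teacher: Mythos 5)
Your proposal is correct and follows essentially the same route as the paper: show the intersection $\Fix(\alpha)\cap\omega_{\alpha}(G)$ is trivial via Proposition~\ref{prop:claims}(c) and the absence of elements of order $2$ in an odd-order group, then invoke Lemma~\ref{lem:fix_alpha} to conclude $\Fix(\alpha)\,\omega_{\alpha}(G)=G$ and hence the internal direct product decomposition. No gaps; your write-up is, if anything, slightly more explicit about citing Proposition~\ref{prop:claims}(a),(b) for the subgroup facts.
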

\begin{proof}
Let $x\in \Fix(\alpha)\cap \omega_{\alpha}(G)$. Since $x\in \omega_{\alpha}(G)$, by Proposition~\ref{prop:claims}(c), we have $\alpha(x)=x^{-1}$. 
On the other hand, since $x\in \Fix(\alpha)$, we have $\alpha(x)=x$, and therefore $x=x^{-1}$. Since $G$ is a group of odd order, we can conclude that $x=1_G$.
Therefore, $|\Fix(\alpha)\cap \omega_{\alpha}(G)|=1$, 
which implies 
$$
|\Fix(\alpha)\omega_{\alpha}(G)|=
\frac{|\Fix(\alpha)|\cdot |\omega_{\alpha}(G)|}{|\Fix(\alpha)\cap \omega_{\alpha}(G)|}=
|G|,
$$ 
and therefore $G=\Fix(\alpha)\times \omega_{\alpha}(G)$.
\end{proof}

Proposition~\ref{prop:odd} enables us to describe generalized Cayley graphs on an Abelian group of odd order in the following way. 

\begin{proposition}
Let $X$ be a generalized Cayley graph on an Abelian group of odd order. Then $X$ is isomorphic to the graph $Y$ given with
\begin{enumerate}
\item $V(Y)=G_1\times G_2$, where $G_1$ and $G_2$ are Abelian groups of odd order;
\item $E(Y)=\{\{(g_1,g_2),(g_1s_1,g_2^{-1}s_2)\}\mid (g_1,g_2)\in G_1\times G_2,~ (s_1,s_2) \in \overline{S}\}$
where $\overline{S}\subseteq (G_1\setminus \{1\})\times G_2$ such that 
 $(s_1,s_2)\in \overline{S}\Leftrightarrow (s_1^{-1},s_2)\in \overline{S}$.
\end{enumerate}
\end{proposition}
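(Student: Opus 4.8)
The plan is to use Proposition~\ref{prop:odd} to split the group and then rewrite the adjacency rule in product coordinates. Let $X = GC(G,S,\iota')$ be a generalized Cayley graph on an Abelian group $G$ of odd order, where $\iota'$ is the defining automorphism of order dividing two (I am not assuming $\iota'$ is inversion, only that it is some involutory automorphism). By Proposition~\ref{prop:odd} we may write $G = \Fix(\iota') \times \omega_{\iota'}(G)$; set $G_1 = \Fix(\iota')$ and $G_2 = \omega_{\iota'}(G)$, both Abelian of odd order (as subgroups of $G$). Under this internal direct product decomposition every element of $G$ is uniquely $g = (g_1, g_2)$ with $g_1 \in G_1$, $g_2 \in G_2$, and by construction $\iota'(g_1,g_2) = (g_1, g_2^{-1})$, since $\iota'$ fixes $G_1$ pointwise and inverts $G_2$ pointwise by Proposition~\ref{prop:claims}(c).

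Next I would transport the adjacency. Two vertices $x = (x_1,x_2)$ and $y = (y_1,y_2)$ are adjacent in $X$ iff $\iota'(x^{-1})y \in S$, i.e. iff $(x_1^{-1}y_1,\, x_2 y_2) \in S$. Writing $S$ in coordinates and substituting $s_1 = x_1^{-1} y_1$, $s_2 = x_2 y_2$, this says $y_1 = x_1 s_1$ and $y_2 = x_2^{-1} s_2$ for some $(s_1,s_2) \in S$. So defining $\overline{S} = S \subseteq G_1 \times G_2$ directly, the edge set is exactly $E(Y)$ as displayed, provided we verify the two stated constraints on $\overline{S}$. The condition $\overline{S} \subseteq (G_1 \setminus\{1\}) \times G_2$ is the loop-free condition: Definition~\ref{def:gen_cayley}(ii) is equivalent to $\omega_{\iota'}(G) \cap S = \emptyset$, and in coordinates $\omega_{\iota'}(g_1,g_2) = \iota'(g_1,g_2)(g_1,g_2)^{-1} = (1, g_2^{-2})$; since $G_2$ has odd order, $g_2 \mapsto g_2^{-2}$ is onto $G_2$, so $\omega_{\iota'}(G) = \{1\} \times G_2$, and avoiding this set is exactly $s_1 \neq 1$ for all $(s_1,s_2) \in S$. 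The symmetry condition $(s_1,s_2) \in \overline{S} \Leftrightarrow (s_1^{-1}, s_2) \in \overline{S}$ is the undirectedness condition, which by the remark after Definition~\ref{def:gen_cayley} is equivalent to $\iota'(S^{-1}) = S$; in coordinates $\iota'((s_1,s_2)^{-1}) = \iota'(s_1^{-1}, s_2^{-1}) = (s_1^{-1}, s_2)$, so this is precisely $(s_1,s_2) \in S \Leftrightarrow (s_1^{-1},s_2) \in S$.

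Finally I would note that the identity map $G \to G_1 \times G_2$ induced by the internal direct product is a graph isomorphism $X \to Y$ by the coordinate computation above, completing the proof. I do not expect a genuine obstacle here: the only care needed is the observation that oddness of $|G_2|$ makes squaring (hence $g_2 \mapsto g_2^{-2}$) a bijection of $G_2$, which is what pins down $\omega_{\iota'}(G)$ to be all of $\{1\} \times G_2$ rather than a proper subset — this is the one place where the odd-order hypothesis is used beyond invoking Proposition~\ref{prop:odd}. Everything else is bookkeeping in the two coordinates.
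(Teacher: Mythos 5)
Your proposal is correct and follows essentially the same route as the paper: decompose $G=\Fix(\alpha)\times\omega_{\alpha}(G)$ via Proposition~\ref{prop:odd}, take the natural isomorphism as the graph isomorphism, and translate Definition~\ref{def:gen_cayley}(ii)--(iii) into the two stated conditions on $\overline{S}$. The only cosmetic difference is that you re-derive $\omega_{\alpha}(G)=\{1\}\times G_2$ via the squaring map, whereas the paper gets $s_1\neq 1$ directly from $S\cap\omega_{\alpha}(G)=\emptyset$ since $G_2$ is $\omega_{\alpha}(G)$ itself.
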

\begin{proof}
Let $X=GC(G,S,\alpha)$ be a generalized Cayley graph on Abelian group $G$ of odd order. By Proposition~\ref{prop:odd} $G=G_1\times G_2$,  where $G_1=\Fix(\alpha)$ and $G_2=\omega_{\alpha}(G)$. Let $\varphi$ be a natural isomorphism between $G$ and $G_1\times G_2$, that is $\varphi(g)=(g_1,g_2)$, where $g=g_1g_2$, $g\in G, g_1\in G_1, g_2\in G_2$.
Let $\overline{S}=\varphi(S)$.
Recall that by Definition~\ref{def:gen_cayley}(ii) $S\cap \omega_{\alpha}(G)=\emptyset$. Hence, for every $s\in S$, we have $s=s_1s_2$, where $s_1\in G_1\setminus\{1\}$ and $s_2\in G_2$, implying that $\overline{S}\subseteq (G_1\setminus \{1\})\times G_2$.
Also, since $s\in S\Leftrightarrow \alpha(s^{-1})\in S$, it follows that $(s_1,s_2)\in \overline{S}\Leftrightarrow (s_1^{-1},s_2)\in \overline{S}$.

Let $\{g,\alpha(g)s\}$ be an edge of $X$. Then $g=g_1g_2$, and $s=s_1s_2$, where $g_1,s_1\in G_1$, $g_2,s_2\in G_2$ and $s\in S$. This further implies that $\alpha(g)s=\alpha(g_1g_2)s_1s_2=g_1g_2^{-1}s_1s_2=(g_1s_1)(g_2^{-1}s_2)$. Hence we obtain $\varphi(g)=(g_1,g_2)$ and $\varphi(\alpha(g)s)=(g_1s_1,g_2^{-1}s_2)$. Therefore, $\varphi$ is an isomorphism between $X$ and $Y$.
\end{proof}

In the case of Abelian groups of even order we do not have a result analogues to Proposition~\ref{prop:odd}. However, we do have a similar result for Abelian groups with cyclic Sylow 2-subgroup.
\begin{proposition}
Let $G$ be an Abelian group with cyclic Sylow 2-subgroup of order $2^n$ and let $\alpha\in \Aut(G)$  be such that $\alpha^2=1$. Let $H$ be the subgroup of $G$  of index $2^n$. Let $H_1=\Fix(\alpha)\cap H$ and $H_2=\omega_{\alpha}(G)\cap H$. Then $G=\ZZ_{2^n}\times H_1\times H_2$ and there exists $a\in \{\pm1, 2^{n-1}\pm 1\}$ such that for $(x,y_1,y_2)\in\ZZ_{2^n}\times H_1\times H_2 $, $\alpha(x,y_1,y_2)=(ax,y_1,y_2^{-1})$.
\end{proposition}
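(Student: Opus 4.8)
The plan is to build up the direct-product decomposition in two stages. First I would isolate the 2-part: since $G$ is Abelian with cyclic Sylow 2-subgroup of order $2^n$, $G$ splits as $G=P\times H$ where $P\cong\ZZ_{2^n}$ is the Sylow 2-subgroup and $H$ is the (unique, characteristic) subgroup of index $2^n$ consisting of all elements of odd order. Because both $P$ and $H$ are characteristic in $G$, the automorphism $\alpha$ (with $\alpha^2=1$) restricts to automorphisms $\alpha|_P$ and $\alpha|_H$. The automorphism group of $\ZZ_{2^n}$ is $(\ZZ_{2^n})^\times$; an involution in it corresponds to an element $a$ with $a^2\equiv 1\pmod{2^n}$, and the solutions of that congruence are exactly $a\in\{1,-1,2^{n-1}-1,2^{n-1}+1\}$ (for $n\le 2$ this list collapses, but the stated set still contains the relevant values). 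This gives $\alpha|_P:x\mapsto ax$ for one such $a$.

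Next I would handle $H$, which has odd order, by invoking Proposition~\ref{prop:odd}: applied to the Abelian odd-order group $H$ with the involutory automorphism $\alpha|_H$, it yields $H=\Fix(\alpha|_H)\times\omega_{\alpha|_H}(H)$. I then need to check that $\Fix(\alpha|_H)=\Fix(\alpha)\cap H=H_1$ and $\omega_{\alpha|_H}(H)=\omega_\alpha(G)\cap H=H_2$. The first is immediate since fixed points of $\alpha$ lying in $H$ are precisely the fixed points of the restriction. For the second, one direction is clear ($\omega_{\alpha|_H}(H)\subseteq\omega_\alpha(G)\cap H$); for the reverse, if $g\in G$ with $\omega_\alpha(g)=\alpha(g)g^{-1}\in H$, write $g=xh$ with $x\in P$, $h\in H$, so $\omega_\alpha(g)=\omega_\alpha(x)\omega_\alpha(h)$ with $\omega_\alpha(x)\in P$ and $\omega_\alpha(h)\in H$; since this product lies in $H$ and $P\cap H=1$, we get $\omega_\alpha(x)=1$, hence $\omega_\alpha(g)=\omega_\alpha(h)\in\omega_{\alpha|_H}(H)$. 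Combining, $H=H_1\times H_2$ and therefore $G=\ZZ_{2^n}\times H_1\times H_2$.

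Finally, on the decomposition $(x,y_1,y_2)\in\ZZ_{2^n}\times H_1\times H_2$, the automorphism $\alpha$ acts coordinatewise because each factor is $\alpha$-invariant: $\alpha$ sends $x$ to $ax$, fixes $y_1$ by definition of $H_1$, and sends $y_2$ to $y_2^{-1}$ by Proposition~\ref{prop:claims}(c) (since $y_2\in\omega_\alpha(G)$). This yields $\alpha(x,y_1,y_2)=(ax,y_1,y_2^{-1})$ with $a\in\{\pm1,2^{n-1}\pm1\}$, as claimed.

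The only genuinely delicate points are the two routine-looking verifications above: that the four listed residues are exactly the square roots of $1$ modulo $2^n$ (a standard fact, proved by lifting solutions through the chain $\ZZ_2\to\ZZ_4\to\cdots$ or by the structure $(\ZZ_{2^n})^\times\cong\ZZ_2\times\ZZ_{2^{n-2}}$ for $n\ge 3$), and the identification $\omega_\alpha(G)\cap H=\omega_{\alpha|_H}(H)$, which hinges on the fact that $\omega_\alpha$ respects the direct-product splitting $G=P\times H$ because $\alpha$ does. Neither step is a real obstacle, so the proof is essentially a careful bookkeeping exercise assembling Proposition~\ref{prop:odd}, Proposition~\ref{prop:claims}(c), and the classification of involutions in $\Aut(\ZZ_{2^n})$.
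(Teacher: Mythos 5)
Your proposal is correct and follows essentially the same route as the paper: split $G=\ZZ_{2^n}\times H$ with $H$ the odd-order part, classify the involution on the cyclic $2$-part via $a^2\equiv 1\pmod{2^n}$, and apply Proposition~\ref{prop:odd} to $H$ with the restricted automorphism, identifying $H_1$ and $H_2$. You even spell out the verification that $\omega_\alpha(G)\cap H=\omega_{\alpha|_H}(H)$, which the paper dismisses as ``not difficult to see,'' so nothing is missing.
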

\begin{proof}
Let $\alpha\in \Aut(G)$ be such that $\alpha^2=1$. Since $H$ is of index $2^n$,  $H$ is of odd order, and hence $G= \ZZ_{2^n}\times H$. Therefore, $\alpha$ can be written as a product of an automorphism $\alpha_1$ of $\ZZ_{2^n}$ that fixes $H$, and an automorphism $\alpha_2$ of $H$ that fixes $\ZZ_{2^n}$, that is $\alpha=\alpha_1 \alpha_2$, where $\alpha_1\in \Aut(\ZZ_{2^n})$ and $\alpha_2\in \Aut(H)$, such that $\alpha_1^2=\alpha_2^2=1$. 
Since $\alpha_1\in \Aut(\ZZ_{2^n})$, we have $\alpha_1(x)=ax$, where $1\leq a<2^n$ is odd. The condition $\alpha_1^2=1$, implies that $a^2\equiv 1\pmod{2^n}$, and it is not difficult to verify that $a\in \{\pm 1, 2^{n-1}\pm 1\}$.

By Proposition~\ref{prop:odd}, $H=\Fix(\alpha_2)\times \omega_{\alpha_2}(H)$. Since  $\alpha(x)=\alpha_2(x)$ for every $x\in H$,  it is not difficult to see that $\Fix(\alpha_2)=\Fix(\alpha)\cap H=H_1$ and $\omega_{\alpha_2}(H)=\omega_{\alpha}(G)\cap H=H_2$. We conclude that $G=\ZZ_{2^n}\times H_1\times H_2$. Proposition~\ref{prop:odd} also implies that for $(y_1,y_2)\in H_1\times H_2$, $\alpha_2(y_1,y_2)=(y_1,y_2^{-1})$. Since $\alpha=\alpha_1\alpha_2$, the result follows.
\end{proof}


\section{Generalized Cayley graphs with respect to the inversion automorphism}
\label{sec:inversion}
\noindent

Throughout this section we assume that $G$ is an Abelian group, and that $\iota$ is the inversion automorphism of $G$, that is $\iota(x)=x^{-1}$ ($\forall x \in G$).
Before stating the main results of this section, let us recall the definition of generalized dihedral groups. For an Abelian group $G$, the {\it generalized dihedral group} $\Dih(G)= G \rtimes \ZZ_{2}$, with $\ZZ_2$ acting on $G$ by inverting elements. More precisely, for $i\in \ZZ_{2}$ and $g_1,g_2\in G$
\begin{center}
$\begin{array}{ccl}
(g_1,i)  \cdot (g_2,0) & = & (g_1g_2,i)\\
(g_1,i)\cdot(g_2,1)&=&(g_1^{-1}g_2,i+1).
\end{array}$
\end{center}
(Note that in a standard definition of the generalized dihedral group  $\Dih(G)= G \rtimes \ZZ_{2}$
the group operation is defined by
$(g_1,i)\cdot(g_2,j)=(g_1\varphi(i)(g_2),i+j)$, where $\varphi\colon \ZZ_2\to \Aut(G)$ is a group homomorphism
mapping $0\in \ZZ_2$ to the identity automorphism of $G$ and mapping $1\in \ZZ_2$ to $\iota\in\Aut(G)$. However, it is not difficult
to check that our definition is equivalent to this standard definition.)

The following theorem shows that generalized Cayley graphs on particular Abelian groups of even order
with respect to the inversion automorphism are isomorphic to Cayley graphs on generalized dihedral groups.

\begin{theorem}\label{thm:AbelInverse}
Let $n$ be a positive integer and let $G$ be a finite Abelian group of odd order. Then the generalized Cayley graph $GC(\mathbb{Z}_{2^n}\times G, S,\iota)$ is isomorphic to a Cayley graph on $\Dih(\mathbb{Z}_{2^{n-1}}\times G)$.
\end{theorem}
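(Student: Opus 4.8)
The plan is to exhibit an explicit bijection $\psi$ from the vertex set $\mathbb{Z}_{2^n}\times G$ onto the group $\Dih(\mathbb{Z}_{2^{n-1}}\times G)$, together with a connection set $T$, and then to check that $\psi$ is an isomorphism onto $\Cay(\Dih(\mathbb{Z}_{2^{n-1}}\times G),T)$. Write $A=\mathbb{Z}_{2^n}\times G$ and $B=\mathbb{Z}_{2^{n-1}}\times G$, so that $|A|=|\Dih(B)|=2^n|G|$. Since $\iota^2=1$ and condition~(iii) of Definition~\ref{def:gen_cayley} holds automatically for $\iota$ (because $\iota(S^{-1})=S$ for every $S$), the only restriction on $S$ is condition~(ii), i.e.\ $S\cap\omega_{\iota}(A)=\emptyset$. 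The first, and crucial, step is the observation that by Proposition~\ref{prop:claims}(b) the set $\omega_{\iota}(A)=\{x^{-2}\mid x\in A\}$ is a subgroup of $A$, and that, because $\gcd(2,|G|)=1$, it equals the index-two subgroup $A_0:=\{(i,g)\in A\mid i\text{ even}\}$. Hence $S$ lies entirely in the nontrivial coset $A_1:=A\setminus A_0$; equivalently, every element of $S$ has odd $\mathbb{Z}_{2^n}$-coordinate. Writing $A$ additively, adjacency $x\sim y$ in $GC(A,S,\iota)$ reads $\iota(x^{-1})y=x+y\in S$, so $GC(A,S,\iota)$ is bipartite with parts $A_0$ and $A_1$, each of size $2^{n-1}|G|=|B|$. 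On the other side, $B\times\{0\}$ has index two in $\Dih(B)$, and any Cayley graph $\Cay(\Dih(B),T)$ with $T\subseteq B\times\{1\}$ is bipartite with parts $B\times\{0\}$ and $B\times\{1\}$; under $\psi$ the two bipartitions will be matched.

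Concretely, identify $\mathbb{Z}_{2^n}$ with $\mathbb{Z}_{2^{n-1}}\times\mathbb{Z}_2$ via $i\mapsto(\lfloor i/2\rfloor,\,i\bmod 2)$ (taking representatives in $\{0,\dots,2^n-1\}$), and identify $\Dih(B)$ with $B\times\mathbb{Z}_2$ as a set. Define $\psi(i,g)=\big((\lfloor i/2\rfloor,g),\,i\bmod 2\big)$; this is a bijection carrying $A_0$ onto $B\times\{0\}$ and $A_1$ onto $B\times\{1\}$. Put $T=\{(\bar s,1)\mid s\in S\}$, where $\bar s=((s_1-1)/2,\,s_2)\in B$ for $s=(s_1,s_2)\in S$ (recall $s_1$ is odd). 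Then $1\notin T$, and since every element of $B\times\{1\}$ is an involution in $\Dih(B)$ we get $T=T^{-1}$ automatically, so $\Cay(\Dih(B),T)$ is a genuine loopless undirected graph.

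It then remains to verify that $\psi$ preserves adjacency. Neither graph has an edge inside a part of its bipartition, so it suffices to take $x\in A_0$ and $y\in A_1$. Writing $x=(2k,g)$ and $y=(2l+1,h)$, a direct computation with the multiplication rule of $\Dih(B)$ gives $\psi(x)^{-1}\psi(y)=\big((k+l,\,g+h),\,1\big)$, and the right-hand side lies in $T$ exactly when $x+y=(2(k+l)+1,\,g+h)$ lies in $S$, i.e.\ exactly when $x\sim y$ in $GC(A,S,\iota)$; the case $x\in A_1$, $y\in A_0$ is handled identically, using that the relevant elements of $\Dih(B)$ are involutions. I expect the only genuinely delicate point to be that first step — recognizing that $\omega_{\iota}(A)$ is the index-two subgroup $A_0$, which forces $S$ into the coset $A_1$ and makes $GC(A,S,\iota)$ bipartite with parts matching the two cosets of $B$ in $\Dih(B)$. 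This is exactly where the hypothesis that $|G|$ is odd is used; without it the graph need not be bipartite in this way and the construction collapses. Everything after that step is routine, if somewhat tedious, bookkeeping with the group operation of $\Dih(B)$.
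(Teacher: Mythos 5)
Your proposal is correct and follows essentially the same route as the paper's proof: the same bijection $(i,g)\mapsto\bigl(\bigl(\lfloor i/2\rfloor,g\bigr),i\bmod 2\bigr)$, the same connection set $\varphi(S)\subseteq B\times\{1\}$, and the same key observation that oddness of $|G|$ forces every element of $S$ to have odd $\ZZ_{2^n}$-coordinate. Your bipartite/coset framing is just a slightly more structural way of packaging the paper's verification via its displayed product formula, so there is nothing substantive to add.
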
 

\begin{proof}
 Let  $X=GC(\mathbb{Z}_{2^n}\times G, S,\iota)$. 
It is clear that $( (\mathbb{Z}_{2^{n-1}}\times G)\times \mathbb{Z}_2,\cdot)$ 
with the operation $\cdot$ defined in the following way:
\begin{eqnarray*}
((x_1,g_1),i)\cdot ((x_2,g_2),0)&=&((x_1+x_2,g_1g_2),i)\\
((x_1,g_1),i)\cdot ((x_2,g_2),1)&=&((-x_1+x_2,g_1^{-1}g_2),i+1)
\end{eqnarray*}
is the generalized dihedral group $\Dih(\mathbb{Z}_{2^{n-1}}\times G)$.
(The group $G$ is Abelian, but we write its operation in multiplicative way.)
Observe that  $((x,g),0)^{-1}=((-x,g^{-1}),0)$ and 
$((x,g),1)^{-1}=((x,g),1)$.
It is also not difficult to verify that for $x_1,x_2\in \ZZ_{2^{n-1}}$ and $g_1,g_2\in G$,
\begin{equation}\label{eq:product}
((x_1,g_1),0)^{-1}\cdot ((x_2,g_2),1)=((x_1,g_1),1)^{-1}\cdot ((x_2,g_2),0)=((x_1+x_2,g_1g_2),1)
\end{equation}

 
For $(x,g)\in \mathbb{Z}_{2^n}\times G$, define a mapping $\varphi :\mathbb{Z}_{2^n}\times G\rightarrow  (\mathbb{Z}_{2^{n-1}}\times G)\times \mathbb{Z}_2$ with
$$\varphi ((x,g))=\left(\left(\left\lfloor \frac{x}{2}\right\rfloor, g\right),x\ \text{mod}\ 2\right).$$

It is not difficult to verify that $\varphi$ is bijection. 
Definition~\ref{def:gen_cayley}(ii) implies that for any $(x,g)\in \mathbb{Z}_{2^n}\times G$, $\iota\left((x,g)^{-1}\right)\cdot(x,g)=(x,g)\cdot(x,g)=(2x,g^2)\not \in S$. 
Since $G$ is of odd order we have $G=\{g^2\mid g\in G\}$.  Therefore, if $s=(x,g)\in S$, then $x$ is odd and $\varphi (s)=\left(\left(\frac{x-1}{2},g \right),1\right)$.

We claim that $\varphi$ is isomorphism between $X$ and $\Cay(\Dih(\ZZ_{2^{n-1}}\times G),\varphi(S))$.
Let $(x_1,g_1)$ and  $(x_2,g_2)$ be two adjacent vertices of $X$. Then $(x_1+x_2,g_1g_2)\in S$. We have already seen that for each element in $S$ its first coordinate is an odd element from $\ZZ_{2^n}$, hence either $(x_1\equiv 0\ \text{mod}\ 2, x_2\equiv 1\ \text{mod}\ 2)\ \text{or}\ (x_1\equiv 1\ \text{mod}\ 2, x_2\equiv 0\ \text{mod}\ 2)$. 

Consider first the case when $x_1\equiv 0\ \text{mod}\ 2$ and  $x_2\equiv 1\ \text{mod}\ 2$. 
Observe that now $ \left\lfloor \frac{x_1}{2} \right\rfloor + \left\lfloor \frac{x_2}{2}\right\rfloor= \left\lfloor \frac{x_1+x_2}{2} \right\rfloor$.
Then $\varphi((x_1,g_1))=\left( \left( \left\lfloor \frac{x_1}{2} \right\rfloor ,g_1 \right), 0 \right)$ and $\varphi((x_2,g_2))=\left( \left( \left\lfloor \frac{x_2}{2} \right\rfloor ,g_2 \right), 1 \right)$ and using (\ref{eq:product}), we obtain
\begin{multline*}
\varphi((x_1,g_1))^{-1}\cdot\varphi((x_2,g_2))=\left( \left( \left\lfloor \frac{x_1}{2} \right\rfloor ,g_1 \right), 0 \right)^{-1}\cdot \left( \left( \left\lfloor \frac{x_2}{2} \right\rfloor ,g_2 \right), 1 \right) =\\
\left( \left( \left\lfloor \frac{x_1}{2} \right\rfloor +  \left\lfloor \frac{x_2}{2} \right\rfloor ,g_1 g_2 \right), 1 \right)=
  \left( \left( \left\lfloor \frac{x_1+x_2}{2} \right\rfloor  ,g_1g_2 \right), 1 \right)
=\varphi((x_1+x_2,g_1g_2))\in \varphi(S).
\end{multline*}
Similarly, if  $x_1\equiv 1\ \text{mod}\ 2$ and  $x_2\equiv 0\ \text{mod}\ 2$, then again using (\ref{eq:product}), we obtain
\begin{multline*}
\varphi((x_1,g_1))^{-1}\cdot\varphi((x_2,g_2))=\left( \left( \left\lfloor \frac{x_1}{2} \right\rfloor ,g_1 \right), 1 \right)^{-1}\cdot \left( \left( \left\lfloor \frac{x_2}{2} \right\rfloor ,g_2 \right), 0 \right) 
=\varphi((x_1+x_2,g_1g_2))\in \varphi(S).
\end{multline*}
Therefore $\varphi((x_1,g_1))^{-1}\cdot\varphi((x_2,g_2))\in \varphi(S)$ and hence $\varphi(x_1,g_1)$ and $\varphi(x_2,g_2)$ are adjacent in $\Cay(\Dih(\ZZ_2^{n-1}\times G),\varphi(S))$.

Suppose now that $\varphi(x_1,g_1)$ and $\varphi(x_2,g_2)$ are adjacent in $\Cay(\Dih(\ZZ_2^{n-1}\times G),\varphi(S))$ for some $x_1,x_2\in \ZZ_{2^n}$ and $g_1,g_2\in G$. Then $\varphi((x_1,g_1))^{-1}\cdot\varphi((x_2,g_2))\in \varphi(S)$, and we conclude that $x_1$ and $x_2$ have different parity. Further, using (\ref{eq:product}), we obtain that $\varphi((x_1,g_1))^{-1}\cdot\varphi((x_2,g_2))=\varphi((x_1+x_2,g_1g_2))$. Therefore, $(x_1+x_2,g_1g_2)\in S$, and hence $(x_1,g_1)$ and $(x_2,g_2)$ are adjacent in $X$. This shows that $\varphi$ is isomorphism between $X$ and $\Cay(\Dih(\ZZ_2^{n-1}\times G)$.
\end{proof}

Theorem~\ref{thm:AbelInverse} shows that every generalized Cayley graph on  an Abelian group with cyclic Sylow 2-subgroup and with respect to the inversion automorphism is a Cayley graph. The following two examples  show that this does not hold for Abelian groups in general.

\begin{example}\label{ex:2gr}
Let $G=\mathbb{Z}_{2^m}\times \mathbb{Z}_{2^n},\ m\geq 1,\, n\geq 2$, $S=\{(1,0),(0,1),(1,1)\}$ and let $\iota$ be the inversion automorphism of $G$. Then $GC(G,S,\iota)$ is not vertex-transitive.
\end{example}
\begin{proof}
Let us consider the triangles contained in $X=GC(G,S,\iota)$. 
Suppose that the vertices $a,b,c\in G$ form a triangle.  Each edge of this triangle is generated by a different element from $S$, since one element of $S$ generates a perfect matching of the graph.
Therefore, without loss of generality we may assume that
 $a+b=(1,0),\ b+c=(1,1),\ c+a=(0,1)$. From this we obtain $b=(1,0)-a$, $c=(1,1)-b=(0,1)+a$ and $a=(0,1)-c=-a$, and hence $2a=0$. Therefore, each triangle in $X$ contains one element of order $2$ from $G$. If $m\geq 3$ or $n\geq 3$ then the vertex $(2,2)$ does not lie on a triangle, hence $X$ is not vertex-transitive in this case. 
 If $n=2$ and $m\in \{1,2\}$, then it is not difficult to see that all vertices of $X$ does not lie on the same number of triangles. This concludes the proof.  
\end{proof}

\begin{example}\label{ex:Z2 x Z2}
Let $G=\mathbb{Z}_{2}\times \ZZ_2\times \ZZ_{2k+1},\ k\geq 1$, $S=\{(1,0,0),(0,1,0),(1,1,1)\}$ and 
 let $\iota$ be the inversion automorphism of $G$. Then $GC(G,S,\iota)$ is not vertex-transitive.
\end{example}
\begin{proof}
It is easy to verify that the vertex $(0,0,0)$ does not lie on a triangle, whereas the vertex $(0,0,k)$ lies on the triangle $[(0,0,k),(1,0,-k),(0,1,k+1)]$ since $(0,1,k+1)$ is adjacent to  $(0,0,k)$. Therefore $GC(G,S,\iota)$ is not vertex-transitive. 
\end{proof}

Before we state the main result of this section, we need the following lemma.

\begin{lemma}\label{lemma:directproduct}
Let $X=GC(G_1,S_1,\alpha_1)$, $Y=GC(G_2,S_2,\alpha_2)$, and let $\alpha$ be the automorphism of $G_1\times G_2$ defined  by $\alpha(g_1,g_2)=(\alpha_1(g_1),\alpha_2(g_2))$ for $g_1\in G_1$ and $g_2\in G_2$. Then $X\times Y\cong GC(G_1\times G_2,S_1\times S_2,\alpha)$.
\end{lemma}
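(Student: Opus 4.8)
The plan is to exhibit an explicit graph isomorphism between $X \times Y$ and $GC(G_1 \times G_2, S_1 \times S_2, \alpha)$, namely the identity map on the common vertex set $G_1 \times G_2$, and then check it preserves adjacency in both directions. First I would verify that the triple $(G_1 \times G_2, S_1 \times S_2, \alpha)$ actually satisfies the three conditions of Definition~\ref{def:gen_cayley}: condition (i), $\alpha^2 = 1$, is immediate from $\alpha_1^2 = \alpha_2^2 = 1$ acting coordinatewise; condition (ii), that $\alpha((g_1,g_2)^{-1})(g_1,g_2) = (\omega_{\alpha_1}(g_1)^{-1}, \omega_{\alpha_2}(g_2)^{-1})$ (or the analogous expression) avoids $S_1 \times S_2$, follows because $\omega_{\alpha_1}(g_1) \notin S_1$ already by (ii) for $X$; and condition (iii), equivalently $\alpha((S_1 \times S_2)^{-1}) = S_1 \times S_2$, follows from $\alpha_1(S_1^{-1}) = S_1$ and $\alpha_2(S_2^{-1}) = S_2$ coordinatewise.

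Next I would unwind the adjacency conditions on both sides. Two vertices $(x_1,x_2)$ and $(y_1,y_2)$ are adjacent in $GC(G_1 \times G_2, S_1 \times S_2, \alpha)$ precisely when $\alpha((x_1,x_2)^{-1})(y_1,y_2) \in S_1 \times S_2$, which splits coordinatewise into $\alpha_1(x_1^{-1})y_1 \in S_1$ and $\alpha_2(x_2^{-1})y_2 \in S_2$ — that is, exactly the conjunction "$x_1 \sim y_1$ in $X$ and $x_2 \sim y_2$ in $Y$." On the other side, by the definition of the direct product recalled in the introduction, $(x_1,x_2) \sim (y_1,y_2)$ in $X \times Y$ iff $x_1 \sim y_1$ in $X$ and $x_2 \sim y_2$ in $Y$. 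Since the two adjacency conditions are literally the same, the identity map is the desired isomorphism, and I would conclude the proof there.

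There is really no serious obstacle here — the statement is essentially a bookkeeping exercise in how the generalized Cayley adjacency relation interacts with direct products. The only point requiring a moment's care is making sure the factored form of $\alpha((x_1,x_2)^{-1})(y_1,y_2)$ is written correctly, keeping track of the order of multiplication in each coordinate (the groups need not be Abelian in this lemma), and confirming that a product $(s_1, s_2)$ lies in $S_1 \times S_2$ if and only if $s_1 \in S_1$ and $s_2 \in S_2$, which is just the definition of the Cartesian product of subsets. So the write-up is short: check (i)--(iii) for the product data, then observe that both adjacency relations reduce to the same coordinatewise conjunction.
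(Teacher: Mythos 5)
Your proposal is correct and follows essentially the same route as the paper: first check that $(S_1\times S_2,\alpha)$ satisfies conditions (i)--(iii) of Definition~\ref{def:gen_cayley} coordinatewise (the paper does this via $\omega_{\alpha}(G_1\times G_2)=\omega_{\alpha_1}(G_1)\times\omega_{\alpha_2}(G_2)$, you argue the same fact elementwise), and then observe that adjacency in $GC(G_1\times G_2,S_1\times S_2,\alpha)$ splits into exactly the two coordinatewise adjacency conditions defining $X\times Y$, so the identity map on the common vertex set is the isomorphism. The only caveat is your parenthetical identification of $\alpha(g^{-1})g$ with $\omega_{\alpha}(g)^{-1}$, which need not hold verbatim in a non-Abelian group (it equals $\omega_{\alpha}(g^{-1})$), but since condition (ii) for $X$ and $Y$ directly excludes $\alpha_1(x_1^{-1})x_1$ from $S_1$ and $\alpha_2(x_2^{-1})x_2$ from $S_2$, your argument goes through as you hedged.
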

\begin{proof}
Let us first verify  that $GC(G_1\times G_2, S_1\times S_2,\alpha)$ is well-defined, that is, that $S_1\times S_2$ and $\alpha$ satisfy Definition~\ref{def:gen_cayley}. It is clear that $\alpha$ is an automorphism of $G_1\times G_2$, and that $\alpha^2=1$. 
We have $\omega_{\alpha}(G_1\times G_2)=\left\lbrace \alpha((g_1,g_2)^{-1})(g_1,g_2)\mid g_1\in G_1, g_2\in G_2 \right\rbrace=\omega_{\alpha_1}(G_1)\times \omega_{\alpha_2}(G_2)$. Since $S_1\cap \omega_{\alpha_1}(G_1)=\emptyset$ and $S_2\cap \omega_{\alpha_2}(G_2)=\emptyset$, it follows that $(S_1\times S_2)\cap \omega_{\alpha}(G)=\emptyset$, and therefore Definition~\ref{def:gen_cayley}(ii) is satisfied. It is straightforward to verify that $\alpha(S^{-1})=\alpha_1(S_1^{-1})\times \alpha_2(S_2^{-1})=S_1\times S_2=S$, and consequently Definition~\ref{def:gen_cayley}(iii) is satisfied.

Two vertices $(x_1,y_1)$ and $(x_2,y_2)$ of $X\times Y$ are adjacent in $X\times Y$  if and only if $x_1$ is adjacent to $x_2$ in $X$ and $y_1$ is adjacent to $y_2$ in $Y$. This is equivalent to $\alpha_1(x_1^{-1})x_2\in S_1$ and $\alpha_2(y_1^{-1})y_2\in S_2$, which is equivalent to $\alpha((x_1,y_1)^{-1})(x_2,y_2)\in S_1\times S_2=S$, that is to $\alpha((x_1,y_1)^{-1})(x_2,y_2)\in S$, and this is equivalent to $(x_1,y_1)$ being adjacent to $(x_2,y_2)$ in $GC(G_1\times G_2,S_1\times S_2,\alpha)$. This concludes the proof.
\end{proof}

Theorem~\ref{thm:AbelInverse} shows that every generalized Cayley graph on an Abelian group with cyclic Sylow $2$-subgroup with respect to the inversion automorphism is also a Cayley graph. The same result holds if $G$ is an elementary Abelian 2-group. Namely, in this case the inversion automorphism is the identity mapping, since each element of $G$ is of order 2. 
In the following theorem, we prove that these are the only Abelian groups with this property.

\begin{theorem}\label{thm:maininversion}
Let $G$ be an Abelian group and let $\iota$ be the inversion automorphism of $G$. Then every generalized Cayley graph on $G$ with respect to $\iota$ is a Cayley graph if and only if one of the following holds:
\begin{enumerate}[(i)]
\itemsep=0pt
\item $G$ is an elementary Abelian $2$-group;
\item the Sylow 2-subgroup of $G$ is cyclic.
\end{enumerate} 
\end{theorem}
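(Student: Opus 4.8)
The ``if'' direction is already established by Theorem~\ref{thm:AbelInverse} (for cyclic Sylow $2$-subgroup) and the remark preceding this theorem (for elementary Abelian $2$-groups: $\iota$ is trivial, so every generalized Cayley graph is literally a Cayley graph). So the whole work is in the ``only if'' direction, and the plan is to prove the contrapositive: if $G$ is Abelian, not an elementary Abelian $2$-group, and its Sylow $2$-subgroup is not cyclic, then there exists a subset $S$ satisfying Definition~\ref{def:gen_cayley}(ii)--(iii) with respect to $\iota$ such that $GC(G,S,\iota)$ is not a Cayley graph. Since every vertex-transitive graph that fails to be a Cayley graph is in particular one we want, but more to the point, since a non-vertex-transitive graph is certainly not a Cayley graph, it suffices to exhibit $S$ with $GC(G,S,\iota)$ \emph{not vertex-transitive}. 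Examples~\ref{ex:2gr} and~\ref{ex:Z2 x Z2} already do exactly this in two template cases, so the strategy is to reduce the general case to these.

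The structural reduction is the heart of the argument. Write $G = P \times H$ where $P$ is the Sylow $2$-subgroup and $H$ has odd order. The hypotheses say $P$ is noncyclic and $P$ is not elementary Abelian (if $P$ were elementary Abelian, then unless $H=1$ we would still need to worry — wait: if $P$ is elementary Abelian and nontrivial and noncyclic, $G$ need not be an elementary Abelian $2$-group because $H$ may be nontrivial; so this subcase must be handled too). So I would split into two cases according to the structure of $P$. First, if $P$ has an element of order $\geq 4$, then since $P$ is noncyclic, $P$ has a direct factor $\ZZ_{2^m}\times\ZZ_{2^n}$ with $n\geq 2$; pulling back the set $S$ from Example~\ref{ex:2gr} (supported on this factor, $0$ elsewhere) gives a generalized Cayley graph on $G$ which restricted to the relevant cosets behaves like the Example~\ref{ex:2gr} graph. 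Second, if $P$ is elementary Abelian (hence $\cong \ZZ_2^r$ with $r\geq 2$ since $P$ is noncyclic) but $H\neq 1$, then $G$ has $\ZZ_2\times\ZZ_2\times\ZZ_{2k+1}$ as a direct factor for some $k\geq 1$ (take any prime $q$ dividing $|H|$ and a $\ZZ_q$ factor), and we invoke Example~\ref{ex:Z2 x Z2} the same way.

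To make the pull-back rigorous, I would prove a small lemma: if $G = A \times B$ and $S_0 \subseteq A$ satisfies Definition~\ref{def:gen_cayley}(ii)--(iii) with respect to $\iota_A$, then $S := S_0 \times \{1_B\}$ satisfies the conditions with respect to $\iota_G$, and $GC(G,S,\iota_G) \cong GC(A,S_0,\iota_A) \times |B| K_1$, i.e.\ it is a disjoint union of $|B|$ copies of $GC(A,S_0,\iota_A)$ — or more usefully, observe that vertex-transitivity of $GC(G, S_0 \times \{1_B\}, \iota)$ would force vertex-transitivity of $GC(A,S_0,\iota_A)$, because the connected component of each vertex $(a,b)$ is $GC(A,S_0,\iota_A)$ on the slice $A\times\{b^{-1}?\}$. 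Actually the cleanest route is: any automorphism of $GC(G,S,\iota_G)$ permutes connected components, each component being isomorphic to $GC(A,S_0,\iota_A)$; hence if $GC(A,S_0,\iota_A)$ is not vertex-transitive, neither is $GC(G,S,\iota_G)$. This is elementary once the component structure is identified via the formula ``$x$ adjacent to $\iota(x)s$'': on coordinate $B$, neighbours of $(a,b)$ have second coordinate $b^{-1}\cdot 1 = b^{-1}$, so components are not simply the slices — one must check $b = b^{-1}$ forces $b$ to have order dividing $2$, and since $|B|$ is controlled we track this carefully, or alternatively choose the decomposition so that $B$ lands inside a factor where this is transparent.

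\textbf{Main obstacle.} The delicate point is the bookkeeping in the component/pull-back step: because $\iota$ inverts the $B$-coordinate, the graph $GC(G, S_0\times\{1_B\}, \iota)$ is \emph{not} literally a disjoint union of copies of $GC(A,S_0,\iota_A)$ over the slices $A\times\{b\}$ unless every element of $B$ is an involution. I expect the correct statement is that it is a disjoint union indexed by the cosets of $\omega_\iota(B)$-type orbits, and one must verify each piece is isomorphic to $GC(A,S_0,\iota_A)$ (or a product of it with something transitive) so that non-vertex-transitivity is inherited. Everything else — the group-theoretic reduction to the two template factors, and re-checking Definition~\ref{def:gen_cayley}(ii)--(iii) for the pulled-back $S$ — is routine, the latter being an immediate consequence of Lemma~\ref{lemma:directproduct} applied with the trivial generalized Cayley graph $GC(B,\emptyset,\iota_B)$ on the complementary factor.
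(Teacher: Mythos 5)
Your overall strategy coincides with the paper's: the ``if'' part is Theorem~\ref{thm:AbelInverse} plus the observation that $\iota$ is trivial on elementary Abelian $2$-groups, and the ``only if'' part is proved contrapositively by splitting according to whether the noncyclic Sylow $2$-subgroup has an element of order $4$, reducing to the non-vertex-transitive Examples~\ref{ex:2gr} and~\ref{ex:Z2 x Z2}. Where you genuinely differ is the reduction device: the paper writes $G\cong A\times H$ with $A$ the template factor, takes $S=S_1\times S_2$ with $S_2=H\setminus\{1_H\}$, identifies $GC(G,S,\iota)$ with the direct product of the template graph and $GC(H,S_2,\iota|_H)$ via Lemma~\ref{lemma:directproduct}, and then quotes the criterion from \cite{HIK} that a direct product is vertex-transitive only if both factors are; you instead pull back the template set as $S=S_0\times\{1_B\}$ and argue through connected components, which is more elementary and self-contained (no product theorem, and no need to pick an admissible connection set on the complementary factor). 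Your plan does work, and the ``main obstacle'' you flag largely dissolves: you never need to identify the pieces over $b$ with $b^2\neq 1_B$. Since the neighbours of $(a,1_B)$ are exactly the $(a',1_B)$ with $aa'\in S_0$, the slice $A\times\{1_B\}$ is a union of components of $GC(G,S,\iota)$ and its induced subgraph is a copy of the template graph; now either use the standard fact that all components of a vertex-transitive graph are isomorphic and vertex-transitive (so vertex-transitivity of the big graph would force vertex-transitivity of that slice), or, even more directly, transfer the triangle arguments of Examples~\ref{ex:2gr} and~\ref{ex:Z2 x Z2}, since the vertex lying on no triangle and the vertex lying on a triangle both sit in this slice and all their neighbours stay in it. One small slip to correct: Lemma~\ref{lemma:directproduct} applied with $S_2=\emptyset$ yields the empty connection set, and $S_2=\{1_B\}$ is not admissible on $B$ because $1_B\in\omega_\iota(B)$, so $S_0\times\{1_B\}$ cannot be obtained from that lemma; instead verify Definition~\ref{def:gen_cayley}(ii)--(iii) for $S_0\times\{1_B\}$ directly, which is immediate because $(s,1_B)\in\omega_\iota(A\times B)$ would force $s\in S_0\cap\omega_\iota(A)=\emptyset$, and $\iota(S^{-1})=S$ is inherited from $S_0$.
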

\begin{proof}
If $G$ is an elementary Abelian $2$-group, then $\iota$ is the identity map, and therefore $GC(G,S,\iota)\cong Cay(G,S)$. 
If the Sylow $2$-subgroup of $G$ is cyclic, then $G\cong \ZZ_{2^n}\times H$, where $H$ is an Abelian group of odd order. By Theorem~\ref{thm:AbelInverse} we conclude that $GC(G,S,\alpha)$ is a Cayley graph.

Suppose now that $G$ is not an elementary Abelian $2$-group and that the Sylow $2$-subgroup of $G$ is not cyclic. First consider the case when the Sylow $2$-subgroup of $G$ is elementary Abelian. Then, since $G$ is not elementary Abelian, the order of $G$ must be divisible by some odd number and hence $G=\ZZ_2\times \ZZ_2\times \ZZ_{2k+1}\times H$, where $k$ is a positive integer, and $H$ is an Abelian group.
Let $S_1=\{(1,0,0),(0,1,0),(1,1,1)\}$, $S_2=H\setminus\{1_H\}$, $S=S_1\times S_2$ and let $\iota_1$ be the restriction of $\iota$ to $\ZZ_2\times \ZZ_2\times \ZZ_{2k+1}$ and $\iota_2$ the restriction of $\iota$ to $H$. 
Then, by Lemma~\ref{lemma:directproduct}, $GC(G,S,\iota)\cong GC(\ZZ_2\times \ZZ_2\times \ZZ_{2k+1},S_1,\iota_1)\times GC(H,S_2,\iota_2)$. Since the direct product of two graphs is vertex-transitive if and only if both factors are vertex-transitive (see \cite[Theorem 8.19]{HIK}), and since by Example~\ref{ex:Z2 x Z2}, $GC(\ZZ_2\times \ZZ_2\times \ZZ_{2k+1},S_1,\iota_1)$ is not vertex-transitive, it follows that $GC(G,S,\iota)$ is not vertex-transitive, and consequently it is not a Cayley graph.

If the Sylow $2$-subgroup of $G$ is not elementary Abelian, then $G\cong \ZZ_{2^m}\times \ZZ_{2^n}\times H$, where $H$ is an Abelian group. Using Example~\ref{ex:2gr} and Lemma~\ref{lemma:directproduct} we construct a non-vertex-transitive generalized Cayley graph on $G$.
\end{proof}


\section{Generalized Cayley graphs of order $2p$}
\label{sec:2p}
\noindent

In this section we consider generalized Cayley graphs of order $2p$, where $p$ is a prime. Let $G$ be a group of order $2p$. Then $G\cong \ZZ_{2p}$ or $G\cong D_{2p}$. In the following lemma we consider the cyclic groups of order $2p$.
\begin{lemma}
\label{lem:cyc2p}
Every generalized Cayley graph on the group $\ZZ_{2p}$ is a Cayley graph.
\end{lemma}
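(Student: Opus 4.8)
The plan is to leverage the structural results already established for Abelian groups. Since $\ZZ_{2p}$ has cyclic Sylow $2$-subgroup (namely $\ZZ_2$ itself, with $2^n = 2$ so $n=1$), I can write $\ZZ_{2p} \cong \ZZ_2 \times \ZZ_p$, where $\ZZ_p$ is an Abelian group of odd order. The obstacle is that Theorem~\ref{thm:AbelInverse} only handles generalized Cayley graphs with respect to the \emph{inversion} automorphism $\iota$, whereas $\Aut(\ZZ_{2p})$ may contain other automorphisms of order two, so I first need to pin down which $\alpha \in \Aut(\ZZ_{2p})$ with $\alpha^2 = 1$ can actually occur.

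The key step is to classify the involutions in $\Aut(\ZZ_{2p}) \cong \ZZ_{p-1}$ (since $\Aut(\ZZ_{2p}) \cong \ZZ_2^* \times \ZZ_p^* \cong \ZZ_p^*$, which is cyclic of order $p-1$). A cyclic group has at most one involution, so there is exactly one nontrivial automorphism $\alpha$ of order two when $p$ is odd, and this must be the inversion automorphism $\iota : x \mapsto -x$. (For $p = 2$, the group is $\ZZ_4$, whose automorphism group $\ZZ_2$ has a unique involution which is again inversion.) Hence for any generalized Cayley graph $GC(\ZZ_{2p}, S, \alpha)$, either $\alpha = 1$ — in which case $GC(\ZZ_{2p}, S, \alpha) = \Cay(\ZZ_{2p}, S)$ is trivially a Cayley graph — or $\alpha = \iota$ is the inversion automorphism.

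In the remaining case $\alpha = \iota$, I apply Theorem~\ref{thm:AbelInverse} directly with $n = 1$ and $G$ (in the notation of that theorem) equal to $\ZZ_p$: the graph $GC(\ZZ_2 \times \ZZ_p, S, \iota)$ is isomorphic to a Cayley graph on $\Dih(\ZZ_{2^0} \times \ZZ_p) = \Dih(\ZZ_p) \cong D_{2p}$. Since $\ZZ_{2p} \cong \ZZ_2 \times \ZZ_p$, this shows $GC(\ZZ_{2p}, S, \iota)$ is a Cayley graph, completing the proof. I do not anticipate any serious difficulty here; the only point requiring a line of justification is the uniqueness of the involution in the cyclic group $\Aut(\ZZ_{2p})$, and the (possibly degenerate) handling of the prime $p = 2$, which can be dispatched by noting $\ZZ_4$ also has cyclic Sylow $2$-subgroup so Theorem~\ref{thm:maininversion} (or Theorem~\ref{thm:AbelInverse}) applies verbatim.
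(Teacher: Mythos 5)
Your proposal is correct and follows essentially the same route as the paper: observe that the only automorphisms of $\ZZ_{2p}$ of order dividing $2$ are the identity and the inversion $\iota$, handle the identity case trivially, and invoke Theorem~\ref{thm:AbelInverse} (with $n=1$, resp.\ $n=2$ when $p=2$) for $\iota$. Your extra details on the uniqueness of the involution in the cyclic group $\Aut(\ZZ_{2p})$ and on the prime $p=2$ are fine but not a different argument.
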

\begin{proof}
If $G\cong \ZZ_{2p}$, then there are only two automorphisms of $G$ of order $2$, namely the identity mapping, or the inversion automorphism $\iota$.
For  $\alpha=1$,   $GC(G,S,\alpha)\cong \Cay(G,S)$ by definition. 
For $\alpha=\iota$, $GC(\ZZ_{2p},S,\alpha)$ is a Cayley graph on 
the group $D_{2p}\cong \ZZ_p\rtimes \ZZ_{2}$
by Theorem~\ref{thm:AbelInverse}.
\end{proof}

In the following lemma we consider the dihedral group of order $2p$, $D_{2p}=\langle \tau, \rho\ |\ \tau ^2=\rho ^p=1,\ \tau \rho \tau =\rho ^{-1}\rangle$.
\begin{lemma}
\label{lem:dih2p}
Every generalized Cayley graph on the dihedral group $D_{2p}$ is a Cayley graph.
\end{lemma}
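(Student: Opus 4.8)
The plan is to classify the automorphisms of $D_{2p}$ of order dividing $2$, build the generalized Cayley graph explicitly in each case, and exhibit an isomorphism onto a genuine Cayley graph. By Proposition~\ref{prop:conjugation} it suffices to treat one representative from each conjugacy class in $\Aut(D_{2p})$. Recall $\Aut(D_{2p})\cong\ZZ_p\rtimes\ZZ_{p-1}$ (the holomorph of $\ZZ_p$), acting trivially on the coset of reflections up to the inner action; concretely an automorphism is determined by $\rho\mapsto\rho^k$ ($k\in\ZZ_p^\ast$) and $\tau\mapsto\tau\rho^j$ ($j\in\ZZ_p$). Such an automorphism has order dividing $2$ iff $k^2\equiv1\pmod p$, i.e. $k=\pm1$. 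So up to conjugacy there are essentially three cases: (a) $\alpha=1$; (b) $\alpha\colon\rho\mapsto\rho,\ \tau\mapsto\tau$ composed with inner-type twists — more precisely the involutions with $k=1$, which are conjugation by a reflection, hence inner; and (c) the outer involution $\alpha\colon\rho\mapsto\rho^{-1},\ \tau\mapsto\tau$ (again up to conjugacy the twist $j$ can be normalized). Case (a) is a Cayley graph by definition.

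For the inner involutions (case (b)), I would use the general principle that a generalized Cayley graph with respect to an \emph{inner} automorphism is a Cayley graph. Indeed, if $\alpha(x)=a x a^{-1}$ with $a^2\in Z(G)$ (forced by $\alpha^2=1$), then the map $x\mapsto xa$ or $x\mapsto a^{-1}x$ conjugates the adjacency rule $y=\alpha(x)s$ into a left-translation rule; one checks directly that $GC(G,S,\alpha)\cong\Cay(G,S')$ for a suitable $S'$ (essentially $S'=a^{-1}S$ or $Sa$, with the defining conditions on $S$ transporting to the Cayley conditions $1\notin S'$, $S'=(S')^{-1}$). This sub-argument is short and purely formal; I would state it as a small observation (or inline it), then apply it since all $k=1$ involutions of $D_{2p}$ are inner.

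The substantive case is (c), the outer involution $\alpha$ with $\rho\mapsto\rho^{-1}$. Here $\Fix(\alpha)=\{1,\tau\rho^{?}\}$ has order $2$ and $\omega_\alpha(D_{2p})=\langle\rho\rangle\cong\ZZ_p$ has order $p$ by Lemma~\ref{lem:fix_alpha}; note $\alpha$ restricted to $\langle\rho\rangle$ is inversion, and $\alpha$ fixes (some) reflection. The plan is to split $S$ according to whether its elements lie in $\langle\rho\rangle$ or in the reflection coset, write out the two adjacency types explicitly in coordinates $(\rho^i$ or $\tau\rho^i)$, and then guess the target group — I expect it to be a Cayley graph on $D_{2p}$ itself or on $\ZZ_{2p}$, constructed by a ``folding'' map analogous to the $\varphi$ in the proof of Theorem~\ref{thm:AbelInverse} that trades the order-two automorphism for an extra $\ZZ_2$ coordinate. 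Concretely I would define a bijection $D_{2p}\to\ZZ_{2p}$ (or $\to D_{2p}$) that sends $\rho^i\mapsto$ (something with even/first-coordinate data) and $\tau\rho^i\mapsto$ (something with odd data), arranged so that the twisted product $\alpha(x)^{-1}y$ becomes an honest group difference, and verify the defining conditions on $S$ transport to the Cayley conditions. The main obstacle will be pinning down the right target group and the exact bijection in case (c) and then checking, without sign errors, that adjacency is preserved in both directions — the same bookkeeping that made the proof of Theorem~\ref{thm:AbelInverse} lengthy; everything else (the automorphism classification and the inner-automorphism reduction) is routine.
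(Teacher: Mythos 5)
There is a genuine gap, and also two incorrect assertions along the way. First, your classification of the involutions is off: for odd $p$, writing $\alpha(\rho)=\rho^k$, $\alpha(\tau)=\tau\rho^j$, the condition $\alpha^2=1$ forces $k\equiv\pm1\pmod p$ and $j(k+1)\equiv0\pmod p$, so $k=1$ yields only the identity (your case (b) is vacuous), while every nontrivial involution has $k=-1$ and is in fact \emph{inner} --- it is conjugation by a reflection (in particular $\rho\mapsto\rho^{-1},\ \tau\mapsto\tau$ is conjugation by $\tau$, not an outer automorphism). Second, the ``short and purely formal'' principle you invoke for inner automorphisms is false as stated: if $\alpha(x)=axa^{-1}$, the twisted difference $\alpha(x^{-1})y=ax^{-1}a^{-1}y$ is not a function of $x^{-1}y$, and neither $x\mapsto xa$ nor $x\mapsto a^{-1}x$ with $S'=a^{-1}S$ or $Sa$ converts it into Cayley adjacency. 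A counterexample lives inside the very situation at hand: take $a=\tau$, so $\alpha(\rho)=\rho^{-1}$, $\alpha(\tau)=\tau$, and $S=\{\tau\rho,\tau\rho^{-1}\}$ (legal, since $\omega_\alpha(D_{2p})=\langle\rho\rangle$ and $\alpha(S^{-1})=S$). Then $GC(D_{2p},S,\alpha)$ is a single $2p$-cycle, whereas $\Cay(D_{2p},\tau S)=\Cay(D_{2p},S\tau)=\Cay(D_{2p},\{\rho,\rho^{-1}\})$ is a disjoint union of two $p$-cycles, even though $\tau S$ is symmetric and avoids the identity. So no translation can serve as the isomorphism; whatever map works must treat the two cosets of $\langle\rho\rangle$ differently.

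What remains is your case (c), which is the whole lemma (all nontrivial involutions fall there), and for it you offer only a plan: ``guess the target group'', ``pin down the exact bijection''. That is precisely the content the paper supplies: condition (ii) forces $S$ to consist of reflections $\tau\rho^{s'}$, condition (iii) gives $S'-k=-(S'-k)$ where $\alpha(\tau)=\tau\rho^{2k}$, and the explicit map $\varphi(\rho^i)=\rho^{-i-k}$, $\varphi(\tau\rho^j)=\tau\rho^j$ --- inverting (and shifting) the rotation coset while fixing the reflection coset, emphatically not a translation --- is an isomorphism onto $\Cay(D_{2p},\{\tau\rho^{s'-k}\mid s'\in S'\})$, so the target group is $D_{2p}$ itself. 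Your idea of first normalizing the twist to $\alpha(\tau)=\tau$ via Proposition~\ref{prop:conjugation} is sound and would let you run the same computation with $k=0$; but as submitted, the decisive verification is missing, and the step you did write out in detail (the inner-automorphism reduction) is the one that is wrong. Also note that your argument silently assumes $p$ odd; $p=2$ needs a separate (trivial) remark.
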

\begin{proof}
The case when $p=2$ is trivial, therefore we will assume that $p>2$.
Let $\alpha$ be an automorphism of $D_{2p}$ of order $2$.
Since automorphisms preserve the order of the elements, it follows that $\alpha (\rho)=\rho^k$, where $(k,p)=1$, and $\alpha (\tau)=\tau \rho ^l$. The fact that $\alpha$ is an involution gives us the following restrictions on $k$ and $l$:
$$
\rho =\alpha (\alpha (\rho))=\rho ^{k^2}\Rightarrow k\equiv \pm 1\pmod{p}$$
$$\tau =\alpha (\alpha (\tau))=\alpha (\tau \rho ^l)=\alpha (\tau)\alpha(\rho ^l)=\tau \rho ^{l(k+1)}\Rightarrow l(k+1)\equiv 0 \pmod{p}.$$
Moreover, if $k=1$ then $l=0$ and $\alpha$ is the identity.
Therefore, we can assume that $\alpha(\rho)=\rho^{-1}$ and $\alpha(\tau)=\tau\rho^{l}$ for some $l\in \ZZ_{p}$.

Since $p$ is odd  there exists $k\in \ZZ_p$ such that $l=2k$.
Consider now a generalized Cayley graph $GC(D_{2p},S,\alpha)$.
Definition~\ref{def:gen_cayley}(ii) implies that
$$\alpha ((\rho ^t)^{-1})\rho ^t=\rho^{2t}\notin S,\ \forall\ t\in \mathbb{Z}_p,$$
and hence each element of $S$ is of the form $\tau\rho^i$, for some $i\in \ZZ_p$.
Let $S'=\{i\in \ZZ_p\mid \tau\rho^{i}\in S\}$.
Definition~\ref{def:gen_cayley}(iii) implies that $S'=l-S'=2k-S'$, and so $S'-k=-(S'-k)$. Let $S_1=\{\tau\rho^{s'-k}\mid s'\in S'\}$.
Define $\varphi :D_{2p}\rightarrow D_{2p}$ by
\begin{eqnarray*}
\varphi (\rho ^i)=\rho ^{-i-k},\\
\varphi (\tau \rho ^j)=\tau \rho ^j.
\end{eqnarray*}
We claim that $\varphi$ is isomorphism between $GC(D_{2p},S,\alpha)$ and $\Cay(D_{2p},S_1)$.
It is clear that $\varphi$ is a bijective mapping.
For an arbitrary edge $\{x,y\}$ of $GC(D_{2p},S,\alpha)$ we may,  
without loss of generality, assume that $x=\rho^a$ and $y=\tau \rho^{a+s'} $ for some $s'\in S'$.
We obtain that $\varphi(x)=\varphi(\rho^a)=\rho^{-a-k}$ and $\varphi(y)=\varphi(\tau \rho^{a+s'})=\tau \rho^{a+s'}=\varphi(x)\tau\rho^{s'-k}=\varphi(x)s_1$, where $s_1=\tau\rho^{s'-k}\in S_1$. Hence $\varphi$ preserves edges and we conclude that $\varphi$ is an isomorphism.
\end{proof}

Lemmas~\ref{lem:cyc2p} and \ref{lem:dih2p} imply the following theorem.
\begin{theorem}
Every generalized Cayley graph of order $2p$ is a Cayley graph.
\end{theorem}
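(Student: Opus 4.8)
The plan is to invoke the classification of groups of order $2p$ and reduce the theorem to the two lemmas already proven. First I would observe that any group $G$ of order $2p$ is, by elementary group theory, isomorphic either to the cyclic group $\ZZ_{2p}$ or to the dihedral group $D_{2p}$ (when $p=2$ these coincide, so there is nothing extra to check). Since a generalized Cayley graph $GC(G,S,\alpha)$ depends only on the isomorphism type of $G$ together with the pair $(S,\alpha)$ — and Proposition~\ref{prop:conjugation} tells us we may even restrict $\alpha$ to a conjugacy class representative in $\Aut(G)$ — it suffices to treat these two group types separately.

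The second step is simply to cite Lemma~\ref{lem:cyc2p} for the case $G\cong\ZZ_{2p}$ and Lemma~\ref{lem:dih2p} for the case $G\cong D_{2p}$. In the cyclic case the argument there enumerates the only two involutory automorphisms (identity and $\iota$) and applies Theorem~\ref{thm:AbelInverse} to see that $GC(\ZZ_{2p},S,\iota)$ is a Cayley graph on $D_{2p}$. In the dihedral case the argument there normalizes the involutory automorphism to $\rho\mapsto\rho^{-1}$, $\tau\mapsto\tau\rho^{l}$ with $l$ even (using that $p$ is odd), shows every element of $S$ must be a reflection, and exhibits an explicit bijection $\varphi$ of $D_{2p}$ that carries $GC(D_{2p},S,\alpha)$ onto an ordinary Cayley graph $\Cay(D_{2p},S_1)$. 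Putting these two cases together yields the theorem.

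I do not expect any genuine obstacle here, since all the real work has been done in Lemmas~\ref{lem:cyc2p} and~\ref{lem:dih2p}; the only thing to be careful about is that the case analysis on groups of order $2p$ is genuinely exhaustive (this is standard: a group of order $2p$ has a normal Sylow $p$-subgroup, and the extension by $\ZZ_2$ is either trivial, giving $\ZZ_{2p}$, or acts by inversion, giving $D_{2p}$) and that $p=2$ is not accidentally excluded. So the proof is a one-line appeal: ``Every group of order $2p$ is isomorphic to $\ZZ_{2p}$ or $D_{2p}$, so the result follows from Lemmas~\ref{lem:cyc2p} and~\ref{lem:dih2p}.''
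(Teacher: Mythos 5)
Your proposal is correct and follows exactly the paper's route: the paper also notes that a group of order $2p$ is isomorphic to $\ZZ_{2p}$ or $D_{2p}$ and then deduces the theorem directly from Lemmas~\ref{lem:cyc2p} and~\ref{lem:dih2p}. Your extra remarks (exhaustiveness of the classification, the $p=2$ case being covered) are consistent with how those lemmas are proved in the paper, so nothing is missing.
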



\section{Unworthy generalized Cayley graphs}\label{sec:unworthy}
\noindent

Recall that a graph $X$ is said to be {\em unworthy} if there exist two vertices of $X$ with the same neighbourhood in $X$, and worthy otherwise. This section deals with the question which generalized Cayley graphs are unworthy.
In order to answer this question, let 
$$
K=\{\alpha(g)\mid gS=S\}=\{g\mid \alpha(g)S=S\}.
$$
where $G$ is a group, and $S\subseteq G$ and $\alpha\in \Aut(G)$ are such that they satisfy conditions in Definition~\ref{def:gen_cayley}.
Observe that $K$ is a subgroup of $G$.

\begin{proposition}\label{prop:unworthy}
The  vertices $x$ and $y$ in $X=GC(G,S,\alpha)$ have the same neighbours in $X$ if and only if $x^{-1}y\in K$.
\end{proposition}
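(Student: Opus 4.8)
The plan is to identify the neighbourhood of each vertex of $X$ with a left coset of $S$, and then to translate equality of two such cosets into membership in $K$.

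First I would record what the neighbourhoods look like. By Definition~\ref{def:gen_cayley}, the set of neighbours of a vertex $x\in V(X)=G$ is exactly
$$
N_X(x)=\{\alpha(x)s\mid s\in S\}=\alpha(x)S .
$$
(Condition (ii) of Definition~\ref{def:gen_cayley} guarantees $X$ has no loops, so ``$x$ and $y$ have the same neighbours'' is simply the set equality $N_X(x)=N_X(y)$, with no exceptional cases to worry about.) Hence $x$ and $y$ have the same neighbours in $X$ if and only if $\alpha(x)S=\alpha(y)S$.

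Next comes the one-line coset computation. Since $\alpha$ is an automorphism, $\alpha(x)^{-1}=\alpha(x^{-1})$ and $\alpha(x^{-1})\alpha(y)=\alpha(x^{-1}y)$. Left-multiplying the equality $\alpha(x)S=\alpha(y)S$ by $\alpha(x)^{-1}$ therefore shows that it is equivalent to $\alpha(x^{-1}y)S=S$. By the description $K=\{g\mid \alpha(g)S=S\}$, this says precisely that $x^{-1}y\in K$, which is the claim. (The two displayed descriptions of $K$ coincide because $\alpha^2=1$, so one is free to use whichever is convenient; and since $K$ is a subgroup, $x^{-1}y\in K$ is in turn equivalent to $y^{-1}x\in K$.)

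I do not expect any genuine obstacle: the only things to be careful about are the elementary identity $\alpha(x)^{-1}=\alpha(x^{-1})$ and the absence of loops in $X$, after which the proof is immediate from $N_X(x)=\alpha(x)S$.
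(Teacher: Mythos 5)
Your proof is correct and follows essentially the same route as the paper: both identify $N_X(x)=\alpha(x)S$ and reduce the equality $\alpha(x)S=\alpha(y)S$ to $\alpha(x^{-1}y)S=S$, i.e.\ $x^{-1}y\in K$. No issues.
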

\begin{proof}
Suppose first that $x^{-1}y\in K$, that is, $y=xk$ for some $k\in K$. Then the neighbourhood of $x$ is $\alpha(x)S$ and the neighbourhood of $y$ is $\alpha(y)S=\alpha(xk)S=\alpha(x)\alpha(k)S=\alpha(x)S$. Therefore $x$ and $y$ have the same neighbours. 

Conversely, if $x$ and $y$ have the same neighbours, then $\alpha(x)S=\alpha(y)S$, which implies $S=\alpha(x^{-1}y)S$. Therefore, $x^{-1}y\in K$, and the result follows.
\end{proof}
\begin{corollary}
The graph $GC(G,S,\alpha)$ is unworthy if and only if $K\neq \{1_G\}$.
\end{corollary}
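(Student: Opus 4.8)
This statement is an immediate consequence of Proposition~\ref{prop:unworthy}, so the plan is simply to unpack the definition of \emph{unworthy} and translate it through that proposition. By definition $X=GC(G,S,\alpha)$ is unworthy precisely when there exist two \emph{distinct} vertices $x,y\in V(X)=G$ having the same neighbourhood in $X$. Proposition~\ref{prop:unworthy} characterizes this condition on a fixed pair: $x$ and $y$ have the same neighbours if and only if $x^{-1}y\in K$. Hence $X$ is unworthy if and only if there exist $x,y\in G$ with $x\neq y$ and $x^{-1}y\in K$.

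The remaining step is to observe that the existence of such a pair is equivalent to $K\neq\{1_G\}$. For one direction, if $K\neq\{1_G\}$ pick $k\in K$ with $k\neq 1_G$ and set $x=1_G$, $y=k$; then $x\neq y$ and $x^{-1}y=k\in K$, so $X$ is unworthy. Conversely, if $x\neq y$ satisfy $x^{-1}y\in K$, then $x^{-1}y$ is an element of $K$ different from $1_G$, so $K\neq\{1_G\}$. (Here I am using only that $K$ is a subgroup of $G$, which is already recorded in the paragraph preceding Proposition~\ref{prop:unworthy}; in fact only that $1_G\in K$ and that $K$ is closed under the relevant products is needed.) Combining the two equivalences gives the claim.

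I do not expect any genuine obstacle here; the only point that requires a moment's care is the logical structure of ``unworthy'', namely that the relation ``having the same neighbourhood'' is reflexive, so the content of the definition lies entirely in demanding the two vertices be distinct, which is exactly what forces the nontriviality of $K$ rather than merely $1_G\in K$.
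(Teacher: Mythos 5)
Your argument is correct and is exactly the immediate deduction the paper intends (the corollary is stated without proof there, being a direct consequence of Proposition~\ref{prop:unworthy}): unworthiness means two \emph{distinct} vertices share a neighbourhood, which by the proposition happens precisely when some nonidentity element lies in $K$. Nothing further is needed.
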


The following proposition shows that an unworthy generalized Cayley graph can be decomposed into the lexicographic product of a worthy graph and an empty graph. (The {\it lexicographic product} of graphs $X$ and $Y$ is the graph $X[Y]$ with vertex set $V(X)\times V(Y)$, where two vertices $(x_1,y_1)$ and $(x_2,y_2)$ are adjacent if and only if either $\{x_1,x_2\}\in E(X)$ or $x_1 = x_2$ and $\{y_1,y_2\}\in E(Y)$.)
Let $X_K$ be the quotient graph of $X$ with respect to the partition $\{xK\mid x \in G\}$. 

\begin{proposition}\label{prop:lexigraphic}
Let $X=GC(G,S,\alpha)$ be unworthy. Then $X\cong X_K [\overline{K_n}]$, where $n=|K|$.
\end{proposition}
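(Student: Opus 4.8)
The statement is that an unworthy generalized Cayley graph $X = GC(G,S,\alpha)$ decomposes as the lexicographic product $X_K[\overline{K_n}]$, where $K$ is the subgroup defined above and $n = |K|$. The natural map to consider is the projection $\pi\colon G \to G/K$ sending $x \mapsto xK$ (here $G/K$ denotes the set of left cosets, which is the vertex set of $X_K$), and the plan is to show that $\pi$, together with an arbitrary bijection $K \to \{1,\dots,n\}$ on each fibre, realizes the claimed isomorphism. Concretely, fix any bijection $\theta\colon K \to V(\overline{K_n})$ and define $\Phi\colon V(X) \to V(X_K) \times V(\overline{K_n})$ by $\Phi(x) = (xK, \theta(x_0^{-1}x))$ where $x_0$ is a chosen coset representative with $x \in x_0K$; since each element of $G$ lies in a unique coset, $\Phi$ is a well-defined bijection.

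\medskip

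The core of the argument is the edge condition. First I would verify, using Proposition~\ref{prop:unworthy}, that $K$ is precisely the set of "twins": $x$ and $xk$ have identical neighbourhoods for every $k \in K$. This immediately gives the key structural fact about $X$: adjacency of $x$ and $y$ in $X$ depends only on the cosets $xK$ and $yK$, \emph{except} that two distinct vertices in the same coset are never adjacent (they have the same neighbourhood, and by Definition~\ref{def:gen_cayley}(ii) a vertex is never adjacent to itself, hence not to any of its twins). Thus: (a) if $xK \neq yK$, then $x \sim y$ in $X$ iff $xK \sim yK$ in $X_K$, where the quotient graph $X_K$ is well-defined precisely because adjacency is constant on the block-pairs; (b) if $xK = yK$ with $x \neq y$, then $x \not\sim y$. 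Comparing this with the definition of the lexicographic product $X_K[\overline{K_n}]$ — in which $(u_1,v_1) \sim (u_2,v_2)$ iff $u_1 \sim u_2$ in $X_K$, or $u_1 = u_2$ and $v_1 \sim v_2$ in $\overline{K_n}$, and $\overline{K_n}$ has no edges — shows that $\Phi$ preserves and reflects adjacency.

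\medskip

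The main obstacle, and the only point requiring genuine care, is establishing that $X_K$ is actually well-defined as a simple graph, i.e.\ that for any two cosets $C_1 = x_1K$ and $C_2 = x_2K$, the relation "some vertex of $C_1$ is adjacent to some vertex of $C_2$" coincides with "every vertex of $C_1$ is adjacent to every vertex of $C_2$", and that when $C_1 = C_2$ there are no edges between distinct vertices. Both follow from the twin property: if $x_1 \sim x_2$ and $x_1' = x_1 k_1$, $x_2' = x_2 k_2$ with $k_i \in K$, then since $x_1'$ has the same neighbourhood as $x_1$ we get $x_1' \sim x_2$, and since $x_2'$ has the same neighbourhood as $x_2$ we get $x_1' \sim x_2'$. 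For the diagonal case, $x \not\sim x$ by Definition~\ref{def:gen_cayley}(ii), so $x \not\sim xk$ for all $k \in K$ since they share neighbourhoods. With $X_K$ thus well-defined, the verification that $\Phi$ is a graph isomorphism is the routine case-check sketched above, and the proof concludes.
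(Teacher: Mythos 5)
Your proof is correct and follows essentially the same route as the paper: both rest on Proposition~\ref{prop:unworthy} to identify vertices in the same left coset of $K$ as twins, from which the lexicographic decomposition $X\cong X_K[\overline{K_n}]$ follows. You merely spell out the details the paper leaves as ``easy to see'' (no edges inside a coset, well-definedness of $X_K$, and the explicit bijection), which is fine.
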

\begin{proof}
Suppose that the generalized Cayley graph $X=GC(G,S,\alpha)$ is unworthy. 
By Proposition~\ref{prop:unworthy}, two vertices of $X$ have the same neighbours if and only if they belong to the same left coset of $K$ in $G$.
Then $S$ is a union of several left cosets of $K$, and all the vertices in the same left coset of $K$ have the same neighbours. It is now easy to see that $X\cong X_K [\overline{K_n}]$.
\end{proof}
\begin{corollary}
If $G$ is an Abelian group and $S=G\setminus \omega_{\alpha}(G)$  then $GC(G,S,\alpha)\cong K_m [\overline{K_{n}}]$, where $n=\omega_{\alpha}(G)$, and $m=|G|/n$.
\end{corollary}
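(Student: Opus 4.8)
The plan is to recognise the complement $S=G\setminus\omega_\alpha(G)$ as one of the "maximally unworthy" generating pairs and to feed it into Proposition~\ref{prop:lexigraphic}. First I would record that $(S,\alpha)$ really defines a generalized Cayley graph: Definition~\ref{def:gen_cayley}(ii) is immediate since $S\cap\omega_\alpha(G)=\emptyset$, and for (iii) (equivalently $\alpha(S^{-1})=S$) I use Proposition~\ref{prop:claims}(b),(c): as $G$ is Abelian, $\omega_\alpha(G)$ is a subgroup on which $\alpha$ acts as inversion, so $\alpha$ maps $\omega_\alpha(G)$ bijectively onto itself and hence maps its complement $S$ onto itself. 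The whole argument then reduces to one computation, namely that the subgroup $K$ from Section~\ref{sec:unworthy} equals $\omega_\alpha(G)$.

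Next I would prove $K=\omega_\alpha(G)$ by two inclusions. For $g\in\omega_\alpha(G)$, Proposition~\ref{prop:claims}(c) gives $\alpha(g)=g^{-1}\in\omega_\alpha(G)$; since left multiplication by an element of the subgroup $\omega_\alpha(G)$ of the Abelian group $G$ fixes every coset of $\omega_\alpha(G)$ setwise, and $S$ is a union of such cosets, $\alpha(g)S=S$, i.e. $g\in K$. Conversely, if $g\in K$ then left translation by $\alpha(g)$ fixes $S$ setwise, hence also fixes the complement $\omega_\alpha(G)$ setwise; as $1_G\in\omega_\alpha(G)$ this forces $\alpha(g)\in\omega_\alpha(G)$, and applying $\alpha$ once more together with $\alpha(\omega_\alpha(G))=\omega_\alpha(G)$ yields $g\in\omega_\alpha(G)$. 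Thus $K=\omega_\alpha(G)$, so $|K|=|\omega_\alpha(G)|=n$.

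Finally, assuming $\alpha\neq 1$ we have $K\neq\{1_G\}$, so $X=GC(G,S,\alpha)$ is unworthy and Proposition~\ref{prop:lexigraphic} gives $X\cong X_K[\overline{K_n}]$. It remains to see that $X_K\cong K_m$ with $m=|G|/n$, i.e. that any two distinct cosets $xK,yK$ are adjacent in $X_K$; since $S$ is a union of cosets of $K$ this amounts to $\alpha(x^{-1})y\notin K$ whenever $x^{-1}y\notin K$. Because $G$ is Abelian, $\alpha(x^{-1})y=(x^{-1}y)\,\omega_\alpha(x)^{-1}$ with $\omega_\alpha(x)^{-1}\in\omega_\alpha(G)=K$, so $\alpha(x^{-1})y\in K\iff x^{-1}y\in K$, which is exactly what we need; hence $X_K$ is complete on the $m=[G:K]=|G|/n$ cosets and $X\cong K_m[\overline{K_n}]$. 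The degenerate case $n=1$ forces $\alpha=1$, and then $X=\Cay(G,G\setminus\{1_G\})=K_{|G|}=K_m[\overline{K_1}]$ directly. I do not expect a genuine obstacle here: every step is a short calculation, and the only real use of the hypothesis is that $G$ being Abelian makes $\omega_\alpha(G)$ a subgroup (so the "union of cosets" reasoning applies) and lets me commute factors in $\alpha(x^{-1})y=(x^{-1}y)\,\omega_\alpha(x)^{-1}$; the one place to be careful is getting both inclusions in $K=\omega_\alpha(G)$ exactly right.
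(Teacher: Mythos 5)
Your proposal is correct and follows essentially the same route as the paper: identify $K=\omega_{\alpha}(G)$ using Proposition~\ref{prop:claims}, show that the quotient $X_K$ is complete via a commutation argument in the Abelian group, and conclude with Proposition~\ref{prop:lexigraphic}. Your extra checks (well-definedness of $GC(G,S,\alpha)$ and the degenerate case $\alpha=1$, where $K=\{1_G\}$ and Proposition~\ref{prop:lexigraphic} does not literally apply) are minor refinements the paper leaves implicit, not a different approach.
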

\begin{proof}
Since $G$ is Abelian, by Proposition~\ref{prop:claims}(ii), $\omega_{\alpha}(G)$ is a subgroup of $G$. 
We have $K=\{\alpha(g)\mid g\cdot (G\setminus \omega_{\alpha}(G))=G\setminus \omega_{\alpha}(G) \}=\{\alpha(g)\mid g\cdot \omega_{\alpha}(G)=\omega_{\alpha}(G) \}=\{\alpha(g)\mid g\in \omega_{\alpha}(G) \}$.
By Proposition~\ref{prop:claims}(iii), $\alpha(g)=g^{-1}$ for each $g\in \omega_{\alpha(G)}$, and therefore $K=\omega_{\alpha}(G)$.

We claim that $X_K\cong K_m$. The number of vertices in $X_K$ is equal to the index of $K$ in $G$, which is equal to $|G|/|K|=|G|/n=m$. Let $xK$ and $yK$ be two different vertices of $X_K$, that is $x^{-1}y\not\in K$. This implies that $\alpha(x)$ and $y$ are adjacent in $X$. Then, since $G$ is Abelian, $\alpha(x)= x\alpha(x)x^{-1}\in xK$. Therefore, vertices $xK$ and $yK$ are adjacent in $X_K$, and consequently $X_K\cong K_m$. The claim now follows by Proposition~\ref{prop:lexigraphic}.
\end{proof}

\section*{Acknowledgment}
The work of A. H. is supported in part by the Slovenian Research Agency (research program P1-0285 and research projects N1-0032, and N1-0038). The work of K. K. is supported in part by the Slovenian Research Agency (research program P1-0285 and research projects N1-0032, N1-0038, J1-6720, and J1-6743), 
in part by WoodWisdom-Net+, W$^3$B, and in part by NSFC project 11561021. 
The work of P. P. is supported in part by the Slovenian Research Agency (research program P1-0285 and Young Researcher Grant).

\end{document}